\numberwithin{equation}{section}
\newtheorem{theorem}{Theorem}[section]
\newtheorem{lemma}[theorem]{Lemma}
\newtheorem{proposition}[theorem]{Proposition}
\newtheorem{corollary}[theorem]{Corollary}
\newtheorem{definition}[theorem]{Definition}
\newtheorem{remark}[theorem]{Remark}
\newtheorem{op}[theorem]{Open Problem}
\newcommand{\Ev}{\mathbb{E}}
\newcommand{\E}{\mathbb{E}}
\newcommand{\Pv}{\mathbb{P}}
\newcommand{\CD}{\mathcal {D}}
\newcommand{\CK}{\mathcal {K}}
\newcommand{\CMnD}{\mathrm{CM}_n(\boldsymbol{D})}
\newcommand*{\Zb}{\mathbb Z}
\newcommand*{\Rb}{\mathbb R}
\newcommand*{\ve}{\varepsilon}
\newcommand*{\al}{\alpha}
\newcommand*{\be}{\begin{equation}}
\newcommand*{\ee}{\end{equation}}
\newcommand*{\ba}{\begin{aligned}}
\newcommand*{\ea}{\end{aligned}}
\newcommand*{\barr}{\begin{array}{c}}
\newcommand*{\earr}{\end{array}}
\newcommand{\BIN}{{\sf Bin}}
\newcommand{\CMnd}{\mathrm{CM}_n(\boldsymbol{d})}
\newcommand{\eqn}[1]{\begin{equation}#1\end{equation}}
\newcommand{\sss}{\scriptscriptstyle}
\newcommand{\e}{\mathrm{e}}
\newcommand{\prob}{{\mathbb{P}}}
\newcommand {\convd}{\stackrel{d}{\longrightarrow}}
\newcommand {\convp}{\stackrel{\sss \prob}{\longrightarrow}}
\begin{document}
\begin{frontmatter}
\title{Tight fluctuations of weight-distances \protect\\ in random graphs with infinite-variance degrees}
\runtitle{Tight fluctuations of weight-distances in random graphs}

\begin{aug}

\runauthor{E. Baroni, R. van der Hofstad, J. Komj\'athy}
\author{Enrico Baroni}
\address{Eindhoven University of Technology,\protect\\
\tt{e.baroni@tue.nl}}

\author{Remco van der Hofstad}
\address{Eindhoven University of Technology,\protect\\
\tt{r.w.v.d.hofstad@TUE.nl}}

\author{J\'{u}lia Komj\'athy}
\address{Eindhoven University of Technology,\protect\\
\tt{j.komjathy@tue.nl}}

\affiliation{Eindhoven University of Technology}

\end{aug}
\date{\today}

\begin{abstract}
We prove results for first-passage percolation on the configuration model with i.i.d.\ degrees having finite mean, infinite variance and i.i.d.\ weights with strictly positive support of the form $Y=a+X$, where $a$ is a positive constant. We prove that the weight of the optimal path has tight fluctuations around the asymptotical mean of the graph-distance {\em if and only if} the following condition holds: the random variable $X$ is such that the continuous-time branching process describing first-passage percolation exploration in the same graph with excess edge weight $X$ has a positive probability to reach infinitely many individuals in a finite time. This shows that almost shortest paths in the graph-distance proliferate, in the sense that there are even ones having tight total excess edge weight for various edge-weight distributions.
\end{abstract}

\begin{keyword}[class=AMS]
\kwd[Primary ]{05C80} %random graphs
\kwd{05C82}%small world graphs, complex networks
\kwd{90B15}%Network models, stochastic
\kwd{60C05}%combinatorial probability
\kwd{60D05}%stochastic geometry
\end{keyword}

\begin{keyword}
\kwd{Configuration model, power law degrees, weighted typical distances, first passage percolation}
\end{keyword}

\end{frontmatter}

\maketitle

\section{Introduction}
\subsection{Motivations}
First-Passage Percolation (FPP) has been introduced as a model for the spread of a material in a random medium (see \cite{Hammersley1965}). In more recent times, motivated by the boost in interest in complex networks and the related random graph models for them, it has been involved as a mathematical tool for studying dynamics in complex networks.
A typical setting in this sense is a transportation network in which a flow is carried through (see \cite{2010arXiv1001.2172K}).
The behaviour of the flow depends on the number of edges in the shortest path between the vertices of the network and the passage time cost to cross the edges.
The corresponding mathematical model is a simple and connected graph $G$, such that to every edge $e$ it is assigned a random variable $Y_{e}$ that represents the passage time through the edge $e$, where the edge weights $(Y_e)$ are a collection of independent and identically distributed (i.i.d.) random variables. The main object of study of first-passage percolation is the time that a given flow starting from a vertex $u$ takes to reach a vertex $v$.

In this paper, we study first-passage percolation in the setting of the configuration model random graph, with i.i.d.\ degrees with finite mean and infinite variance.
An important question in the study of FPP regards the geometry of the geodesics, or the time-minimizing paths. For this, we consider three functionals on the weighted graph: the graph-distance between two vertices, i.e., the minimal number of edges between them, the weight-distance, i.e., the minimal total weight between the two vertices along all paths connected them, and the hopcount, that is the number of edges of the minimal-weight path. We focus on the fluctuations of these functionals around their asymptotic mean.
We investigate the general case of i.i.d.\ edge weights of the form $Y=a+X$, where $a$ is a positive constant that we can take to be equal to one without loss of generality. 
In a previous paper \cite{2015arXiv150601255B} the authors have shown in a similar setting, that the weight-distance \cite[Theorem 5] {Janson:arXiv0804.1656}) grows proportionally to $\log\log n$. We now extend this result proving that fluctuations around the mean are tight for both the weight-distance and the hopcount if and only if the random variable $X$ is such that the continuous-time branching process approximation of the local exploration in the given graph with weight $X$ is \emph{explosive}. With this we mean that the process has a positive probability of having infinitely many individuals alive in a finite time. This is a non-universality result in contrast with the case of general edge-weights with a continuous distribution and support that contains $0$ (see \cite{BhaHofHoo12}) and finite variance degrees, in which the hopcount satisfies a central limit theorem. Further, it gives a rather precise picture about the proliferation of almost shortest paths in the graph distance, and thus about the geometry of the configuration model in the infinite-variance degree setting.

\subsection{Notation and organization}
In this section we introduce notation used throughout the paper.
Given two random variables $X$ and $Y$ that are defined on a sample space that is a subset of $\mathbb{R}$, we say that $X\stackrel{d}{=}Y$ if 
$\Pv(X\leq x)=\Pv(Y\leq x)$ for all $x\in \mathbb{R}$.
 With $o_{q}(1)$ we denote a sequence of real numbers such that $o_{q}(1)\rightarrow 0$ as $q\rightarrow \infty$.
 A sequence of random variables $(X_n)_{n\geq 1}$ converges in probability to a random variable $X$, and we write $X_{n}\stackrel{\Pv}{\rightarrow}X$ if, for all $\varepsilon>0$, $\Pv(|X_{n}-X|>\varepsilon)\rightarrow 0$. 
A sequence of random variables $(X_n)_{n\geq 1}$ converges to $X$ in distribution, and we write $X_{n}\stackrel{d}{\rightarrow}X,$ if 
 $\\\lim_{n\rightarrow\infty}\Pv(X_{n}\leq x)=\Pv(X\leq x)$ for all $x\in \mathbb{R}$ for which $F_{\sss{X}}(x)=\Pv(X\leq x)$ is continuous.
  We denote the set $\{1,2,\dots, n\}$ by $[n]$.
  A sequence of events $\mathcal{E}_{n},n\in\mathbb{N}$ is said to hold with high probability (w.h.p.) if
 \mbox{$\lim_{n\rightarrow\infty}\Pv(\mathcal{E}_{n})=1$.} 
  $\BIN(n,p)$ denotes the random variable with binomial distribution where the number of trials is $n,$ and the probability of success is $p$.
$\CMnd$ denotes the configuration model on $n$ vertices with degree sequence $\boldsymbol{d}$.
 Let $(X_{n})_{n\geq 1}$ be a sequence of random variables and let $\Pv$ be a probability measure, then we say that 
 $(X_{n})_{n\geq 1}$ is tight if for all $\varepsilon>0$, there exists $r>0$ such that $\sup_{n\geq 1}\Pv(|X_{n}|>r)<\varepsilon$.
 
 This section is organized as follows. In Section \ref{themodel}, we introduce the model. In Section \ref{sec-res}, we describe our results.
 We close this section in Section \ref{sec-overview} by giving an overview of the proof and relating it to the literature.
 
\subsection{The model}
\label{themodel}
Our setting is the configuration model $\CMnd$ (see \cite{Bollobas01}) on $n$ vertices, where $\boldsymbol{d}=(d_1,\dots, d_n)$, and $d_{i}$ is the degree vertex $i\in[n]$.
Further, let $\mathcal{L}_{n}=\sum_{i\in[n]}d_{i}$ denote the total degree. The configuration model random graph is obtained as follows: we assign to vertex $i$ a number $d_{i}$ of half-edges and we pair these half-edges uniformly at random. When two half-edges are paired they form an edge, and we continue the procedure until there are no more half-edges available.
If $\mathcal{L}_{n}$ is odd, we add a half-edge to vertex $n$; this extra half-edge makes no difference to our results and we will refrain from discussing it further. 
We consider i.i.d.\ degrees with cumulative distribution function $x\mapsto F(x)$ satisfying 
	\be
	\label{slowlypower}
	 x^{-\tau+1-C(\log x)^{\gamma-1}}\leq 1-F(x)\leq  x^{-\tau+1+C(\log x)^{\gamma-1}},
	\ee	
for $\gamma\in(0,1)$, $C>0$ and $\tau\in(2,3)$. We assume that $F(1)=0$, so that $\min_{i\in[n]} d_{i}\geq 2$ a.s.
The condition  on the minimal degree guarantees that almost all the vertices of the graph lie in the same connected component (see \cite[Proposition 2.1]{KHB13}), or, equivalently, the giant component has size $n(1-o(1))$ (see \cite[Theorem 2.2]{2016arXiv160303254F}).
All edges are equipped with i.i.d.\ edge weights with distribution function $F_{\sss Y}(y)=\Pv(Y\le y)$, where $Y=a+X$ is a non-negative random variable with $\inf\mathrm{supp}(X)=0$, $a>0$, and $X$ has a continuous distribution. Without loss of generality, we can assume $a=1$.
We call $X$ the {\em excess weight} of the edge.
When $d_{i}$ are i.i.d.\  from a distribution $D$ with distribution function $F$ satisfying \eqref{slowlypower}, let $B$ be defined as the (\emph{size-biased version} of $D$)-1, that is,
	\begin{align}
	\label{forwarddegree}
	\Pv(B=k):=\frac{k+1}{\E[D]}\Pv(D=k+1).
	\end{align}
	
We consider several distances and functionals on the graph:
\begin{definition}[Distances in graphs]
\label{edgedistance}
Given two vertices $u$ and $v$, the {\em graph-distance} $\mathcal{D}_{n}(u,v)$  is the minimal number of edges on a path connecting $u$ and $v$.
The \emph{passage-time} or \emph{weight-distance} \mbox{from $u$ to $v$} is defined as
 	\be\label{passagetime}
	W_{n}(u,v):=W_{n}^{\sss Y}(u,v)=\min_{\pi\colon u\rightarrow v}\sum_{e\in \pi}Y_{e},
	\ee
where the minimum is over all paths $\pi$ in $G$ connecting $u$ and $v$, and $Y:=(Y_{e})_{e}$ is the collection of the edge weights.
The \emph{hopcount} $H_{n}(u,v)$ is the number of edges in the smallest-weight path between $u$ and $v$.
\end{definition}
\medskip

The main aim of this paper is to study these functionals on the configuration model with i.i.d.\ degrees having distribution function $F$ satisfying \eqref{slowlypower} and i.i.d.\ edge weights $Y=1+X$, where $\inf\mathrm{supp}(X)=0$.

\subsection{Results}
\label{sec-res}
Our main result is the following theorem:
	\begin{theorem}[Tightness criterion for excess edge-weights]
	\label{integralcondition}
	Consider the configuration model with i.i.d.\ degrees having distribution function $F$ satisfying \eqref{slowlypower} for some $\tau\in(2,3)$, and let $u$ and $v$ be chosen uniformly at random from $[n]$. Suppose that the edge weights are i.i.d.\ and are of the form $Y=1+X$, where $ \inf\mathrm{supp}(X)=0$ and $X$ has cumulative distribution function $F_{\sss X}(x)$ with its generalised inverse $F_{\sss X}^{(-1)}(y):=\inf\{t\in \Rb\colon  F_{\sss X}(t)\ge y\}$ that satisfies that, for some $\ve,C>0,$
	\be
	\label{inverse-crit}
	\int_{1/\ve}^\infty F_{\sss X}^{(-1)}\left(\e^{-Cu}\right)\frac1u\mathrm du <\infty.
	\ee
Then
	\be\label{weightdisres1}
 	W_{n}(u,v)-\frac{2\log\log{n}}{|\log(\tau-2)|}
 	\ee
is a tight sequence of random variables. Consequently, 
	\be
	\label{tightness-hopcount}
	H_{n}(u,v)-\frac{2\log\log{n}}{|\log(\tau-2)|}
	\ee
is tight sequence of random variables.
\end{theorem}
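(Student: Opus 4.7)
Since each edge weight satisfies $Y_e = 1+X_e \geq 1$ almost surely, the deterministic chain $W_n(u,v) \geq H_n(u,v) \geq \mathcal{D}_n(u,v)$ holds, so the lower halves of both tightness claims are inherited from the corresponding tight fluctuations of graph-distances in $\CMnd$ for $\tau\in(2,3)$ (cf.\ \cite{KHB13}), which give $\mathcal{D}_n(u,v) - 2\log\log n/|\log(\tau-2)| = O_{\sss\Pv}(1)$. Moreover, once the matching upper bound $W_n(u,v) \leq 2\log\log n/|\log(\tau-2)| + O_{\sss\Pv}(1)$ is established, the tightness of $H_n(u,v) - 2\log\log n/|\log(\tau-2)|$ follows by the sandwich $\mathcal{D}_n \leq H_n \leq W_n$. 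The heart of the argument is therefore to exhibit, with high probability, a path from $u$ to $v$ of hopcount $2\log\log n/|\log(\tau-2)| + O_{\sss\Pv}(1)$ whose total excess edge weight is $O_{\sss\Pv}(1)$.

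The plan is to build this path symmetrically from the two endpoints and to join the two halves through the high-degree \emph{inner core} of the graph. From $u$, perform a weight-greedy exploration of the weighted configuration model; in the local weak limit this exploration is a continuous-time branching process (CTBP) with offspring law $B$ (the size-biased version of $D$ minus one, cf.\ \eqref{forwarddegree}) and i.i.d.\ edge-lifetimes distributed as $X$. Since $\tau\in(2,3)$, $B$ has infinite mean, and condition \eqref{inverse-crit} is, up to a standard change of variable, the classical Grey-type criterion for such an infinite-mean CTBP to be \emph{explosive}: the birth times $T_k$ along a greedily chosen infinite ray satisfy $V^{(u)} := \lim_k T_k < \infty$ almost surely on survival, and $V^{(u)}$ is a tight random variable. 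I would couple the first $K_n$ generations of the configuration-model exploration to the CTBP for a slowly divergent $K_n$, and stop at the first moment the exploration reaches a vertex of degree at least $\omega_n$ for a well-chosen $\omega_n\to\infty$; by explosion this stopping time carries accumulated excess weight $\leq V^{(u)} = O_{\sss\Pv}(1)$, and an identical construction is run from $v$.

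The two anchor vertices obtained this way are then joined through the layered inner core $\Gamma_k := \{v : d_v \geq n^{c\alpha^k}\}$ with $\alpha = \tau-2$, which by standard half-edge counting satisfies, with high probability, that every vertex of $\Gamma_k$ is joined by a direct edge to some vertex of $\Gamma_{k-1}$. Thus $\log\log n/|\log(\tau-2)| - K_n + O(1)$ greedy steps suffice to descend from $\Gamma_{K_n}$ down to $\Gamma_0$ on each side. Crucially, a vertex in $\Gamma_k$ has $\geq n^{c\alpha^k}$ incident half-edges, so the excess weight of the best available crossing edge is of order $F_{\sss X}^{(-1)}(n^{-c\alpha^k})$; summing these thresholds over all layers reproduces, up to a constant, the very integral appearing in \eqref{inverse-crit}, so the total excess weight accumulated during the inner-core traversal is $O_{\sss\Pv}(1)$. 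Concatenating the two halves through $\Gamma_0$ yields a path of hopcount $2\log\log n/|\log(\tau-2)| + O_{\sss\Pv}(1)$ and total excess weight $V^{(u)} + V^{(v)} + O_{\sss\Pv}(1)$, as required.

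The main obstacle, I expect, is that the CTBP approximation is accurate only while the exploration remains small compared to $n$, whereas the infinite-mean nature of $B$ makes CTBP generation sizes grow super-exponentially, and explosion itself pushes the exploration towards a macroscopic portion of the graph in bounded time. A global coupling is therefore impossible: the truncation scale $K_n$ must be chosen so that explosion has already taken hold in the CTBP by generation $K_n$ (yielding an anchor vertex of degree $\geq\omega_n$ with a tight excess-weight budget), yet early enough that half-edge depletion has not yet invalidated the branching-process picture. Managing the transition from the CTBP-governed phase to the inner-core greedy descent — while controlling depleted half-edge counts, preserving near-independence of the two halves until they meet in $\Gamma_0$, and aligning the per-layer excess-weight estimates with the explosion integral \eqref{inverse-crit} — is the most delicate technical burden of the proof.
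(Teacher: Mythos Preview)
Your sandwich argument for the lower bound and for deducing \eqref{tightness-hopcount} from \eqref{weightdisres1} is exactly right, and the overall architecture (two local phases from $u,v$, joined through a layered core) matches the paper. However, the mechanism you propose for controlling the excess weight in the core differs substantially from the paper's, and as stated it has a genuine gap.

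\textbf{The gap in your layer argument.} You take $\Gamma_k=\{v: d_v\ge n^{c\alpha^k}\}$ with $\alpha=\tau-2$ and claim that the best \emph{crossing} edge from a vertex of $\Gamma_k$ into $\Gamma_{k-1}$ has excess weight of order $F_{\sss X}^{(-1)}(n^{-c\alpha^k})$, using the full degree $n^{c\alpha^k}$ as the number of candidate edges. But with the critical choice $\alpha=\tau-2$ the expected number of half-edges from a vertex at the boundary of $\Gamma_k$ that actually land in $\Gamma_{k-1}$ is
\[
y_k\cdot \frac{S_{y_{k-1}}}{\CL_n}\;\asymp\; y_k\, y_{k-1}^{\,2-\tau}
\;=\; y_{k-1}^{\,\tau-2}\cdot y_{k-1}^{\,2-\tau}\;=\;O(1),
\]
so there are only boundedly many crossing edges per layer, not $n^{c\alpha^k}$. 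The minimum excess weight over $O(1)$ edges is $\Theta(1)$, and summing over $\Theta(\log\log n)$ layers diverges. This is exactly why the paper's Lemma~\ref{yibounds} and Lemma~\ref{summabilitygi} use the \emph{perturbed} recursion $y_{i+1}=y_i^{1/(\tau-2+B(\log y_i)^{\gamma-1})}$: it is tuned so that the connection probabilities $g_i\asymp \exp\{C(\log y_i)^\gamma\}\to\infty$, while still keeping the number of layers at most $\log\log n/|\log(\tau-2)|$.

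\textbf{How the paper handles this, and why it is different.} Rather than picking the minimum-weight crossing edge at each step, the paper runs a \emph{degree-dependent half-edge percolation}: keep a half-edge at a vertex of degree $d$ precisely when its excess weight is at most $\xi_d=F_{\sss X}^{(-1)}(p(d))$, with $p(d)=\exp\{-C'(\log d)^{\gamma}\}$ and $\gamma=|\log\alpha|/|\log(\tau-2)|<1$ for some $\alpha\in(\tau-2,1)$. Proposition~\ref{csh} shows the percolated empirical degree distribution still obeys the power-law lower bound \eqref{condtight}, so Proposition~\ref{uppertight} applies verbatim to the \emph{percolated} graph and yields a path of length $\le 2\log\log n/|\log(\tau-2)|+O(1)$ whose excess weight is at most $\sum_i \xi_{y_i}=\sum_i F_{\sss X}^{(-1)}(\e^{-C/\alpha^i})$, which is finite exactly by \eqref{inverse-crit}. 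This decouples the weight thresholding from the structural connectivity argument; your direct min-weight approach entangles the two, and once you repair the counting (perturb the layers, then ask for an edge that is both crossing \emph{and} has weight $\le\xi$), you are effectively rederiving the percolation construction.

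\textbf{The initial phase.} Your use of CTBP explosion to reach an anchor of degree $\omega_n\to\infty$ with tight excess weight is correct in spirit (and condition~\eqref{inverse-crit} is indeed equivalent to explosion, cf.\ Remark~\ref{explosiveconservative}), but it is heavier than needed and introduces exactly the coupling difficulties you flag in your last paragraph. The paper avoids this entirely: Lemma~\ref{tightshortdis} shows that in a \emph{bounded} number $m=m(\varepsilon,k)$ of graph-steps from $u$ one reaches a vertex whose \emph{percolated} degree is $\ge k$; the excess weight there is just a finite sum of i.i.d.\ $Y$'s, hence tight with no explosion argument needed.
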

\medskip

The tightness of the hopcount in \eqref{tightness-hopcount} follows easily, since, for our choice of edge weights,
	\be 
	\mathcal{D}_{n}(u,v)\leq H_{n}(u,v)\leq W_{n}(u,v).
	\ee
Therefore, by \eqref{weightdisres1} in Theorem \ref{integralcondition} and the tightness of 
	\be
\mathcal{D}_{n}(u,v)-\frac{2\log\log n}{|\log(\tau-2)|}
	\ee	
as proved in \cite[Theorem 1.2]{MR2318408}, it follows that \eqref{tightness-hopcount} holds.

\begin{remark}\label{explosiveconservative}\normalfont
The integral condition in \eqref{inverse-crit} is equivalent to the following. Let $B$ be defined in \eqref{forwarddegree} and $X$ be the excess edge-weight. Let  $(h_{\sss B},F_{\sss X})$ be the age-dependent branching process where individuals have random life-lengths with distribution $F_{\sss X}(t)$. At death, every individual produces a family of random size with offspring distribution $F_{\sss B}$. Here $h_{\sss B}(s)$ denotes the probability generating function of the distribution  $F_{\sss B}$. 
Then, \eqref{inverse-crit} holds {\em if and only if} this age-dependent branching process is \emph{explosive}, meaning that there is a positive probability that $N_{t}=\infty$, where $N_{t}$ denotes the number of individuals alive at some finite time $t>0$. Otherwise, the process is called conservative. See \cite{amini2013} or \cite[Section 6]{2016arXiv160201657K} for the proof of this result.
\end{remark}
\medskip

We next investigate what happens when the criterion in \eqref{inverse-crit} fails:

\begin{proposition}[Non-tightness of excess edge-weight]
\label{iff}
Consider the configuration model with i.i.d.\ degrees from distribution $F$ satisfying \eqref{slowlypower} with $\tau\in(2,3)$. If condition \eqref{inverse-crit} does not hold, then
	\be
	W_{n}(u,v)-\frac{2\log\log{n}}{|\log(\tau-2)|}\convp\infty.
	\ee
\end{proposition}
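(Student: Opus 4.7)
The plan is to combine the known tightness of $\mathcal{D}_n(u,v)-2k_n$ (where $k_n:=\log\log n/|\log(\tau-2)|$), established in \cite[Theorem 1.2]{MR2318408}, with a branching-process lower bound on the sum of excess weights along any weight-minimizing path. By Remark \ref{explosiveconservative}, the failure of \eqref{inverse-crit} is equivalent to the age-dependent BP $(h_B,F_X)$ being conservative, and since $\min_{i\in[n]}d_i\ge 2$ forces survival a.s., this in turn is equivalent to the first-birth time $T_k$ of generation $k$ satisfying $T_k\to\infty$ almost surely as $k\to\infty$.

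\textbf{Step 1 (BP coupling).} Couple the breadth-first neighborhoods of $u$ and $v$ in $\CMnd$, with each edge carrying its excess weight $X_e$, to two independent copies $\mathrm{BP}^{(u)}$ and $\mathrm{BP}^{(v)}$ of the age-dependent BP whose root has offspring $D$, subsequent generations have offspring $B$, and edges carry i.i.d.\ life-lengths distributed as $X$. In the $\tau\in(2,3)$ regime, this BP has doubly-exponential growth of ratio $1/(\tau-2)$, so that for any constant $C$ large enough, generation $k_n-C$ has size $o(n^{1/3})$ with high probability. The collision bounds of the type developed in \cite{MR2318408,2015arXiv150601255B} then produce a coupling that is exact up to generation $k_n-C$ on both sides and with disjoint explored regions, with probability $1-o(1)$.

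\textbf{Step 2 (Excess-weight lower bound and conclusion).} Let $\pi^\star$ be a weight-minimizing path, so that $W_n(u,v)=H_n+\sum_{e\in\pi^\star}X_e$ with $H_n\geq\mathcal{D}_n(u,v)$. By tightness of $\mathcal{D}_n(u,v)-2k_n$, for $C$ chosen large enough one has $|\pi^\star|\geq 2(k_n-C)$ with high probability, so that the first and the last $k_n-C$ edges of $\pi^\star$ are edge-disjoint paths emanating from $u$ and $v$ respectively, both entirely contained in the coupled region provided by Step 1. Within the coupling, each such path of length $k_n-C$ corresponds to an ancestral line in the respective BP ending at a generation-$(k_n-C)$ individual whose birth time is precisely the sum of the $X$-weights along the line. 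Consequently,
\[
W_n(u,v)-\mathcal{D}_n(u,v)\;\geq\;\sum_{e\in\pi^\star}X_e\;\geq\;T^{(u)}_{k_n-C}+T^{(v)}_{k_n-C}.
\]
Conservativeness gives $T^{(u)}_{k_n-C}+T^{(v)}_{k_n-C}\convp\infty$ as $n\to\infty$; combined with the tightness of $\mathcal{D}_n(u,v)-2k_n$, this yields $W_n(u,v)-2k_n\convp\infty$.

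\textbf{Main obstacle.} The delicate point is Step 1: the coupling must be carried out up to a depth $k_n-C$ that grows with $n$, rather than at a fixed depth. The doubly-exponential growth of the BP forces the cutoff just short of a generation whose BP-size is already $\Theta(n^{\al})$ for some $\al>0$, and quantitative control on BP-size tails, collision probabilities, and the joint exploration of the $u$- and $v$-sides is needed. A subsidiary issue — the possibility that $\pi^\star$ exits one of the coupled regions before reaching depth $k_n-C$ — is ruled out by combining $H_n\geq\mathcal{D}_n$ with the tightness of $\mathcal{D}_n-2k_n$, which forces both endpoints of $\pi^\star$ to penetrate to depth $k_n-C$ on their respective sides.
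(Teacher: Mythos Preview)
Your approach is plausibly correct but takes a substantially more laborious route than the paper. The paper observes the elementary deterministic inequality $W_n^{Y}(u,v)\ge W_n^{X}(u,v)+\mathcal D_n(u,v)$, valid because for \emph{any} path $\pi$ one has $\sum_{e\in\pi}(1+X_e)=|\pi|+\sum_{e\in\pi}X_e\ge \mathcal D_n(u,v)+\min_{\pi'}\sum_{e\in\pi'}X_e$. This immediately yields
\[
W_n^{Y}(u,v)-\frac{2\log\log n}{|\log(\tau-2)|}\;\ge\; W_n^{X}(u,v)+\Big(\mathcal D_n(u,v)-\frac{2\log\log n}{|\log(\tau-2)|}\Big),
\]
after which the paper simply invokes Lemma~\ref{conservativeweight} (the black-box statement $W_n^{X}(u,v)\convp\infty$ in the conservative case, cited from \cite{2015arXiv150601255B,2016arXiv160201657K}) together with tightness of $\mathcal D_n-2k_n$. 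No path-tracking, no growing-depth coupling, no disjointness of balls is needed at this level.

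What your argument does, in effect, is re-derive a portion of Lemma~\ref{conservativeweight} by hand: your bound $\sum_{e\in\pi^\star}X_e\ge T^{(u)}_{k_n-C}+T^{(v)}_{k_n-C}$ is a path-specific version of $W_n^{X}\convp\infty$ (note that $\sum_{e\in\pi^\star}X_e\ge W_n^X(u,v)$ trivially, since $\pi^\star$ is one particular path). The cost is exactly what you flag as the main obstacle --- carrying the BP coupling up to depth $k_n-C$, controlling doubly-exponential tails, and keeping the $u$- and $v$-explorations disjoint. These hurdles can be overcome (similar arguments underlie the references cited for Lemma~\ref{conservativeweight}), but the paper's decomposition sidesteps them by separating the ``$+1$'' part of the weight from the ``$X$'' part \emph{before} taking the minimum over paths. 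That is the idea you are missing: you never need to locate the $Y$-optimal path inside a coupled region, because the decomposition bounds $W_n^Y$ from below by two quantities that can each be analyzed globally and independently.
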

\medskip

To prove Proposition \ref{iff} we state a useful lemma:
\begin{lemma}\label{conservativeweight}
Consider the configuration model with i.i.d.\ degrees from distribution $F$ satisfying \eqref{slowlypower} with $\tau\in(2,3)$, and let $u$ and $v$ be chosen uniformly from $[n]$. Suppose we have i.i.d.\ weights given by the random variable $X,$ where $X$ does not satisfies condition \eqref{inverse-crit}. Then
	\be
 	W_{n}^{\sss X}(u,v)\convp\infty.
	\ee 
\end{lemma}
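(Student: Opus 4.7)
The plan is to prove the quantitative statement that, for every fixed $K>0$, $\Pv(W_n^{\sss X}(u,v)\le K)\to 0$ as $n\to\infty$, which is equivalent to $W_n^{\sss X}(u,v)\convp\infty$. The intuition is that, when \eqref{inverse-crit} fails, the local weighted exploration from $u$ reveals only a tight random number of vertices within any bounded weight budget, so a uniformly chosen $v$ lies outside this exploration with probability tending to $1$.

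First, I would couple the $X$-weighted exploration from $u$ with an age-dependent branching process $\mathsf{BP}$ whose root has offspring distribution $D$, whose subsequent individuals have offspring distribution $B$ from \eqref{forwarddegree}, and whose individuals have i.i.d.\ lifetimes distributed as $X$. Write $B_K^{(u)}:=\{w\in[n]\colon W_n^{\sss X}(u,w)\le K\}$ for the $X$-weighted ball of radius $K$ around $u$, and let $N_t$ denote the number of individuals born by time $t$ in $\mathsf{BP}$. Following the standard technique for $\CMnd$ with infinite-variance degrees (see e.g.\ \cite{MR2318408,KHB13}), this coupling is faithful with an error of $o(1)$ up to the first moment the exploration has revealed, say, $n^{1/4}$ vertices. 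By Remark \ref{explosiveconservative}, the failure of \eqref{inverse-crit} is equivalent to $\mathsf{BP}$ being conservative, i.e.\ $\Pv(N_t<\infty)=1$ for every $t\in(0,\infty)$. Fixing $\delta>0$, there then exists $M=M(K,\delta)<\infty$ with $\Pv(N_K\le M)\ge 1-\delta$, and the coupling transfers this to $\Pv(|B_K^{(u)}|\le M)\ge 1-\delta-o(1)$.

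Next, I would reveal $v$ last. Since $v$ is uniform on $[n]$ and independent of the exploration from $u$, on the event $\{|B_K^{(u)}|\le M\}$ the conditional probability that $v\in B_K^{(u)}$ is at most $M/n$, and hence
\[
\Pv(W_n^{\sss X}(u,v)\le K)=\Pv(v\in B_K^{(u)})\le \frac{M}{n}+\Pv(|B_K^{(u)}|>M)\le \frac{M}{n}+\delta+o(1).
\]
Sending $n\to\infty$ and then $\delta\downarrow 0$ completes the argument.

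The main obstacle is making the coupling between the $X$-weighted exploration and $\mathsf{BP}$ rigorous in the infinite-variance degree regime, where hub vertices of degree $\Theta(n^{1/(\tau-1)})$ could in principle distort the branching-process approximation. Conservativeness, however, forces only a tight random number of vertices to be revealed over the bounded time horizon $[0,K]$, keeping the exploration well below the $n^{1/4}$ threshold at which the coupling would break down; so this step reduces to a standard local-weak-convergence argument rather than the more delicate one needed when neighbourhoods grow unboundedly with $n$.
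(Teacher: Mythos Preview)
Your proposal is correct and follows essentially the same route as the paper: the paper's proof is a one-sentence appeal to the branching-process approximation of local neighbourhoods in $\CMnd$ together with conservativeness (citing \cite{2015arXiv150601255B} and \cite[(11) and Proposition 3]{2016arXiv160201657K}), and your argument is precisely the unpacking of that appeal --- couple the weighted ball $B_K^{(u)}$ to the conservative age-dependent process, deduce $|B_K^{(u)}|$ is tight, and conclude by the uniformity of $v$.
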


\begin{proof}
The statement is a consequence of the branching process approximation of the neighborhood of a vertex in $\CMnd$ and the hypothesis that the process is conservative, see  \cite{2015arXiv150601255B} and \cite[(11) and Proposition 3]{2016arXiv160201657K}.
\end{proof}
\medskip

Now we can prove Proposition \ref{iff}:

\begin{proof}[Proof of Proposition \ref{iff}]
Let $Y_{e}=X_{e}+Z_{e}$, where $Y_{e},X_{e}$ and $Z_{e}$ are edge weights, then, for any pair of vertices $u_{0}$ and $v_{0}$ in the same connected component,
	\be\label{weightineq}
	W_{n}^{\sss Y}(u_{0},v_{0})\geq W_{n}^{\sss X}(u_{0},v_{0})+W_{n}^{\sss Z}(u_{0},v_{0}),
	\ee
since
	\be
	\sum_{e\in\pi^{\sss Y}}X_{e}+Z_{e}\geq \sum_{e\in\pi^{\sss X}}X_{e}+\sum_{e\in\pi^{\sss Z}}Z_{e},
	\ee
where $\pi^{\sss Y},\pi^{\sss X}$ and $\pi^{\sss Z}$ are the minimal-weight paths for the edge-weights $(X_{e})_{e}$, $(Y_{e})_{e}$ and $(Z_{e})_{e},$ respectively.
For $Z_{e}=1$, $W_{n}^{\sss Z}(u_{0},v_{0})=\mathcal{D}_{n}(u_{0},v_{0})$. If $u,v$ are two uniformly chosen vertices, then we rewrite \eqref{weightineq} as
	\be
	W_{n}^{\sss Y}(u,v)-\frac{2\log\log n}{|\log(\tau-2)|}\geq W_{n}^{\sss X}(u,v)+\Big(\mathcal{D}_{n}(u,v)-\frac{2\log\log n}{|\log(\tau-2)|}\Big),
	\ee
and, as proved in \cite[Theorem 1.2]{MR2318408}, $\mathcal{D}_{n}(u,v)-\frac{2\log\log n}{|\log(\tau-2)|}$ is tight. If $X_{e}$ is such that condition \eqref{inverse-crit} 
in Theorem \ref{tightdist} is not satisfied, then, by Lemma \ref{conservativeweight},
 	\be
 W_{n}^{\sss X}(u,v)\convp\infty,
	 \ee
	  which implies 
 	\be
 W_{n}^{\sss Y}(u,v)-\frac{2\log\log n}{|\log(\tau-2)|}\convp\infty.
 	\ee
Thus $W_{n}(u,v)-\frac{2\log\log n}{|\log(\tau-2)|}$ is not a tight sequence of random variables.
\end{proof}
\medskip

We next merge Proposition \ref{iff} and Theorem \ref{integralcondition} in a single theorem. For this, let $F_{\sss X}$ be the cumulative distribution for the random variable $X$, let $D$ be the degree distribution  and $B$ the random variable defined in \eqref{forwarddegree}. If $(h_{\sss B},F_{\sss X})$ is the modified age-dependent branching process defined in Remark \ref{explosiveconservative}, we can define the following sets:
	\be\label{eq:univ-exp}
	\mathcal{E}(D)=\{F_{\sss X}\text{ s.t. }(h_{\sss B},F_{\sss X})\text{ is explosive}\},
	\ee
and
	\be
\mathcal{T}(D)=\{F_{\sss X}\text{ s.t. }W_{n}^{\sss (1+X)}-\frac{2\log\log n}{|\log(\tau-2)|} \text{ is tight}\}.
	\ee

\begin{theorem}[Universality class for tightness total excess edge-weight]
\label{thm-univ}
If $D$ satisfies the power-law condition in \eqref{slowlypower} with $\tau\in(2,3)$, then
	\be
	\mathcal{E}(D)=\mathcal{T}(D).
	\ee                                                
\end{theorem}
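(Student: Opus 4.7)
The plan is to observe that Theorem \ref{thm-univ} is essentially a packaging of Theorem \ref{integralcondition} and Proposition \ref{iff}, with Remark \ref{explosiveconservative} serving as the bridge. The bulk of the mathematical content lies in those earlier statements; what remains is to assemble the equivalence from two containments.

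First, I would invoke Remark \ref{explosiveconservative}, which asserts that the age-dependent branching process $(h_{\sss B},F_{\sss X})$ is explosive if and only if the integral criterion \eqref{inverse-crit} is satisfied. Consequently, the set $\mathcal{E}(D)$ can equivalently be written as the set of continuous distributions $F_{\sss X}$ with $\inf\mathrm{supp}(X)=0$ for which \eqref{inverse-crit} holds. This reduces the claim $\mathcal{E}(D)=\mathcal{T}(D)$ to an equivalence between \eqref{inverse-crit} and the tightness of $W_{n}^{\sss (1+X)}(u,v)-2\log\log n/|\log(\tau-2)|$.

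For the inclusion $\mathcal{E}(D)\subseteq\mathcal{T}(D)$, suppose $F_{\sss X}\in\mathcal{E}(D)$. Then \eqref{inverse-crit} holds, and Theorem \ref{integralcondition} applied with edge weights $Y=1+X$ yields that $W_{n}(u,v)-2\log\log n/|\log(\tau-2)|$ is tight, i.e.\ $F_{\sss X}\in\mathcal{T}(D)$. For the reverse inclusion $\mathcal{T}(D)\subseteq\mathcal{E}(D)$, I would argue by contrapositive: if $F_{\sss X}\notin\mathcal{E}(D)$, then \eqref{inverse-crit} fails, so Proposition \ref{iff} gives
\[
W_{n}^{\sss (1+X)}(u,v)-\frac{2\log\log n}{|\log(\tau-2)|}\convp\infty.
\]
Since convergence in probability to $+\infty$ precludes tightness (for any threshold $r>0$, the probability that the centered weight exceeds $r$ tends to $1$, violating the tightness condition), we conclude $F_{\sss X}\notin\mathcal{T}(D)$, completing the argument.

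There is no serious obstacle here: the deep work has been done in establishing Theorem \ref{integralcondition} and Proposition \ref{iff}. The only genuinely delicate point, and the one I would double-check carefully, is that the two earlier statements are formulated under matching hypotheses on $X$, i.e.\ that $X$ is continuous with $\inf\mathrm{supp}(X)=0$ and that the degree distribution $F$ satisfies \eqref{slowlypower} with $\tau\in(2,3)$, so that both directions apply to exactly the same family of $F_{\sss X}$ used to define $\mathcal{E}(D)$ and $\mathcal{T}(D)$.
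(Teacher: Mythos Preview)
Your proposal is correct and follows exactly the paper's approach: the paper's proof is a single sentence stating that the result follows directly from Proposition \ref{iff}, Theorem \ref{integralcondition}, and Remark \ref{explosiveconservative}, and you have simply unpacked this into the two inclusions with the appropriate invocations.
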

\begin{proof}                                                                                                                                                                                     
The proof follows directly from Proposition \ref{iff}, Theorem \ref{integralcondition} and Remark \ref{explosiveconservative}.
\end{proof}

\begin{remark}[Universality and proliferation of almost shortest paths]
\label{univ}
\normalfont
Theorem \ref{thm-univ} shows that, as in the case of edge weights $(Y_e)_{e}$ with $\inf\mathrm{supp}(Y)=0$, (see \cite[Theorem 4]{2015arXiv150601255B}) there are two universality classes in the case where the edge-weights take the form $Y=1+X$, where $ \inf\mathrm{supp}(X)=0$. Remarkably, these two universality classes are the same. This result shows that there are {\em many} almost shortest paths in the graph distance metric, since we can even find one with tight total excess edge-weight for many distributions of the excess edge weights. However, when the excess edge-weights have too thin tails close to zero, such that \eqref{inverse-crit}, such paths can no longer be found. This gives us a much more complete picture of the geometry of the configuration model with infinite-variance degrees, and brings the discussion of the universality classes of FPP on it substantially further.
\end{remark}
%\medskip

Theorems \ref{integralcondition} and \ref{thm-univ} leave open whether the fluctuations converge in distribution. That is part of the following open problem:

\begin{op}[Weak convergence of fluctuations]
\label{op-weak-conv}
Let $D$ satisfy the power-law condition in \eqref{slowlypower} with $\tau\in(2,3)$. Show that if condition \eqref{inverse-crit}  is satisfied, then
	\be
	W_{n}^{\sss Y}(u,v)-\frac{2\log\log n}{|\log(\tau-2)|}\convd W_{\sss \infty}
	\ee
for some limiting random variable $W_{\sss \infty}$.  If condition \eqref{inverse-crit} is not satisfied, is          
	\be
	W_{n}^{\sss Y}(u,v)-\frac{2\log\log n}{|\log(\tau-2)|}
	\ee     
of the same order of magnitude as $W_{n}^{\sss X}(u,v)$?                     
\end{op}

\subsection{Overview of the proof}
\label{sec-overview}
In this section, we describe the key ingredients in the proof. 

\subsubsection{Upper tightness of the graph-distance $\mathcal{D}_n(u,v)$}
\label{tightdist}
In the first part of the proof we consider a slightly different setting from the one in Section \ref{themodel}. Instead of i.i.d.\ degrees, we consider general fixed degree sequences for which the empirical degree distribution satisfies the lower bound in \eqref{slowlypower}.
In this setting, we prove a uniform upper bound on the difference between the graph-distance  of two vertices of sufficiently high degree and $2\log\log{n}/|\log(\tau-2)|$.
For this we construct a path that has length less than $2\log\log{n}/\log(\tau-2)|$ plus a tight random variable.
The construction is as follows: we start from vertex $u_{k}$ with degree at least $k$, for a fixed but large constant $k$. Then we find a sequence of interconnected sets $\Gamma_{i}:\Gamma_{i}\supset\Gamma_{i+1}$, where $0\leq i\leq b(n),\ \Gamma_{i}=\left\{v\colon d_v\geq y_{i}\right\}$, and $b(n)$ is less than $\log \log n/|\log (\tau-2)|$, for some increasing sequence $y_i$. In more detail, we show that for any fixed small $\varepsilon>0$ there exists an increasing sequence $y_{i}$ such that the following properties hold: $y_0=k$ and a vertex in $\Gamma_{i}$ is connected to at least one vertex in $\Gamma_{i+1}$ with probability at least $1-\varepsilon_{i}$,
and $\sum_{i=0}^{b(n)}\varepsilon_{i}<\varepsilon$. Moreover, the last set of the sequence $\Gamma_{b(n)}$ is a subset of the complete graph formed by the vertices of high degree, by which we mean degree at least $n^{1/2+\delta}$ for some small $\delta>0$. Further, $b(n)$ is a tight random variable away from $\log\log{n}/|\log(\tau-2)|$.  %thus the sequence of sets provides a connection between the vertex $u_{k}$ and the maximal degree vertex of length that is only a tight random variable away from $\log\log{n}/|\log(\tau-2)|$.
So, starting from two uniformly chosen vertices $u_{k}$ and $v_{k}$, the two paths constructed with the procedure above, we connect $u_{k}$ and $v_{k}$ to the same complete graph in a number of steps that is at most $2\log\log{n}/|\log(\tau-2)|$ plus a tight random variable.

\subsubsection{Tightness of the weight-distance $W_n^{\sss Y}(u,v)$ via degree-dependent percolation}
To prove the tightness of the weight-distance, the rough idea is the following: our goal is to modify the construction of the path for the graph-distance so that between each two consecutive layers $\Gamma_i, \Gamma_{i+1}$, only edges with smaller and smaller excess edge-weight are allowed, in such a way that the total excess edge-weight is summable.
To make this idea rigorous, we extend the construction by Janson (see \cite{Janson:arXiv0804.1656}) to a degree-dependent  percolation on the configuration model keeping each half-edge incident to a vertex of degree $d$ with a probability $p(d)$, a function of the degree. (As we remark on in more detail below, we also make sure that the resulting degree sequence, after this thinning, still satisfies \eqref{slowlypower} with $\tau\in(2,3)$.)
We choose this function $p(d)$ in such a way that it can be expressed as $\Pv(X\le \xi_d)$, for some appropriately chosen sequence $\xi_d$ that depends on the distribution of the excess edge-weight $X$. Then, we can view percolation of the half-edges as follows: we keep a half-edge $s$ attached to a vertex with degree $d$ if and only if the excess edge-weight on the half-edge $s$ satisfies $X_s\le \xi_d$.
Note that in Theorem \ref{integralcondition} we have assumed that the weight on an edge is of the form $Y=1+X$, while the degree percolation assigns a weight to each half-edge.  To solve this issue, we solve the case when the edge weights are of the form $Y'=1+X_{1}+X_{2}$ where $X_{1}$ and $X_{2}$ are two i.i.d.\ random variables from distribution $X$. Then, by a stochastic domination argument, $1+ X\ {\buildrel d \over \le}\  1+ X_1+ X_2$, thus, distances in the graph with edge-weights $Y$ are stochastically smaller than distances in the graph with edge weights $Y'$. Further, both distances are bounded from below by the graph-distance. Hence, tightness of the weight-distance with respect to the edge-weights from distribution $Y'$ implies  tightness of the weight-distance with respect to the edge-weights from distribution $Y$.
Starting from an i.i.d.\ degree distribution satisfying \eqref{slowlypower} for some $\tau\in(2,3)$, we prove that under the condition that \eqref{inverse-crit} is satisfied, it is possible to choose $\xi_d$ and thus $p(d)$ in such a way that the new percolated graph has a new degree sequence with an empirical degree distribution that still satisfies a lower bound as the one in \eqref{slowlypower} with the same exponent $\tau$, that is, the setting that allows the construction of the path described in Section \ref{tightdist}.
Namely, to construct a path from $u$ to $v$ with bounded excess edge-weight, we use two steps. First, we approximate a constant-size neighborhood of the vertices $u,v$ by two branching processes to reach two vertices that have degree at least $k$, for some large but fixed constant $k$, in the new percolated graph. Then, in the second step, we connect $u_k$ to $v_k$ within
 the \emph{percolated graph} with a bounded excess edge-weight. Thus,
	\be
	W_{n}(u,v)\leq W_n(u, u_k)+ W_n(v, v_k) +  \mathcal{D}_{n}^p(u_k,v_k)+\sum_{x\in\pi^{*}}2\xi_{d_{x}},
	\ee
where $\mathcal{D}_n^p(\cdot, \cdot)$ denotes the graph-distance within the percolated graph and $\xi_{d_{x}}$ is the upper bound on the excess edge-weight on the half-edges that are attached to $x$. Here $x$ is  a vertex on the constructed path that we denote by $\pi^\star$. The first two contributions are clearly tight, the second is by \cite[Theorem 1.2]{MR2318408}, while the final contribution can be seen to be tight for an appropriate choice of the $(\xi_{d})$ {\em precisely} when \eqref{inverse-crit} is satisfied. This describes the structure of the proof.

\subsection{Discussion and related problems}
First-passage percolation has been studied extensively in different settings, starting from the grid $\Zb^{2}$ to a wide variety of random graphs, including the configuration model.
One of the main problems in first-passage percolation regards the typical weight-distance between two points in the graph. Moreover, if we assume that the edges have a passage-time represented by a collections of i.i.d.\ random variables, a second problem is to determine the geometry of the time-minimizing paths between two points and the way in which they differ from graph-distance paths. A third problem regards the nature of the fluctuations of these distances and of the hopcount around their asymptotic mean values.

In the context of random graphs, these questions have been widely investigated, for instance in \cite{CPC:46717}, Janson proves that on the complete graph $K_{n}$ with i.i.d.\ exponential weights, the weight-distance between two points grows asymptotically as $\log n/n$. Bhamidi, the second author and Hooghiemstra in \cite{BhaHofHoo12} examine the Erd{\H o}s-R\'enyi random graph $G_{n}(p_{n})$ with i.i.d.\ exponential edge weights. When $np_{n}\rightarrow\lambda>1$, the weight-distance centered by a multiple of $\log n$ converges in distribution, while, when $np_{n}\rightarrow\infty$ they prove that the graph-distance is of order $o(\log n)$, and that the addition of edge weights changes the geometry of the graph.
The same  authors show in \cite{BHH10} that on the configuration model, when the degree sequence has  finite variance with an extra logarithmic moment, then first-passage percolation has only one universality class in the sense that $W_n(u,v)-\gamma_n\log{n}$ converges in distribution for some $\gamma_n\rightarrow \gamma>0$, while $H_n(u,v)$ satisfies an asymptotic Central Limit Theorem with asymptotic mean and variance proportional to $\log{n}$. 
In \cite{EskHofHoo06}, van den Esker, the second author and Hooghiemstra generalize the results on configuration model with finite variance degree to a more general class of random graphs, including the Erd{\H o}s-R\'enyi random graph, showing that the fluctuations around the asymptotic mean are tight.

The setting of configuration models with power-law degrees having infinite asymptotic variance has also been investigated:
In \cite{2015arXiv150601255B}, we prove results for the weight-distance for i.i.d.\ edge weights $X$ with $\inf\mathrm{supp}(X)=0$. We have shown the existence of two universality classes, one corresponding to explosive weights as in \eqref{eq:univ-exp} and one corresponding to conservative weights. These two classes correspond to different weight-distances, in particular, in the explosive case, the weight converges to the sum of two i.i.d.\ random variables.
A result on the nature of fluctuation is given by the second author, Hooghiemstra and Znamenski in \cite{MR2318408}, where they proved that the graph-distance for power-law exponent $\tau\in(2,3)$ and i.i.d.\ degrees centers around $2\log\log n/|\log (\tau-2)|$, and that the fluctuations are tight.

\paragraph{Organization of this paper} In Section \ref{sec-tightness-gd} we prove upper tightness of  $\mathcal{D}_{n}(u,v)-2\log\log n/|\log (\tau-2)|$ under relatively weak assumptions on the degrees, which is a crucial ingredient in our proof. In Section \ref{sec-deg-perc} we combine this result with a degree-dependent half-edge percolation argument to find the graph distance in our percolation graph. In Section \ref{sec-tightness} we complete the proof tightness of the weight-distance.

\section{Tightness of the graph-distance}
\label{sec-tightness-gd}
In this section, we consider $\CMnd$ with a deterministic degree sequence $\boldsymbol{d}$ such that the empirical degree distribution satisfies the lower bound in \eqref{slowlypower}. In this setting we prove that the difference between the graph-distance and $2\log\log n/|\log(\tau-2)|$ is uniformly bounded from above. More precisely, we show the following:
\begin{proposition}[Upper tightness of graph distances]
\label{uppertight}
Given $\CMnd,$ where the degree sequence $\boldsymbol{d}=(d_1,\dots, d_n)$ has empirical distribution function $F_{n}(x)$. Suppose that there exists $\alpha>1/2$ such that for all $x\in[x_{0},n^{\alpha})$, 
	\be\label{condtight}
	1-F_{n}(x)\geq  \frac{c}{x^{\tau-1-C(\log x)^{\gamma-1}}},
	\ee 
with $\mathcal{L}_{n}=\sum_{i\in[n]}d_{i}\leq n\beta$ for some positive $\beta$ and a given $x_{0}>0$. Then
for all $\varepsilon_{\ref{uppertight}}>0$, there exists $k=k(\varepsilon_{\ref{uppertight}})\in\mathbb{N}$ s.t., when $u$ and $v$ are two uniformly chosen vertices in $[n]$ with degree at least $k$, conditionally on being in the same connected component,
	\be\label{kbound}
	\sup_{n\geq 1}\Pv\Big(\mathcal{D}_{n}(u,v)-\frac{2\log\log n}{|\log (\tau-2)|}\geq 1~\Big\vert~ d_{u}\geq k, d_{v}\geq k \Big)<\varepsilon_{\ref{uppertight}}.	
	\ee
\end{proposition}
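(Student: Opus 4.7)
The plan is to follow the construction sketched in Section \ref{tightdist}: build from each of $u$ and $v$ a ``ladder'' that climbs through layers of increasing degree and meets inside the high-degree core. Fix $\delta>0$ small enough that $n^{1/2+\delta}<n^{\alpha}$, a summable positive sequence $(\eta_i)_{i\ge 0}$ (e.g.\ $\eta_i=(1+i)^{-2}$), and set
\[
y_0:=k,\qquad y_{i+1}:=\big\lfloor y_i^{\,(1-\eta_i)/(\tau-2)}\big\rfloor,\qquad \Gamma_i:=\{w\in[n]\colon d_w\ge y_i\}.
\]
Stop at the first index $b=b(n)$ with $y_b\ge n^{1/2+\delta}$ and let $\mathrm{Core}_n:=\Gamma_b$. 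I will show that, conditionally on the degree sequence, one can greedily extend from any vertex of $\Gamma_i$ to a vertex of $\Gamma_{i+1}$ at every step, from both $u$ and $v$, with total failure probability less than $\varepsilon_{\ref{uppertight}}$; the two ladders then meet via a single edge in $\mathrm{Core}_n$, giving a path of length at most $2b(n)+1$.

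The per-step estimate comes from the hypothesis \eqref{condtight}. Together with $\mathcal{L}_n\le\beta n$, it yields, for $y_{i+1}\le n^{\alpha}$,
\[
\sum_{w\in\Gamma_{i+1}} d_w\ \ge\ c n\, y_{i+1}^{\,2-\tau+C(\log y_{i+1})^{\gamma-1}}.
\]
Hence, conditional on any partial pairing that has exposed $o(\mathcal{L}_n)$ half-edges, an unexposed half-edge attached to a vertex of $\Gamma_i$ pairs into $\Gamma_{i+1}$ with (quenched) probability at least $p_i:=\tfrac{c}{2\beta}\, y_{i+1}^{\,2-\tau+C(\log y_{i+1})^{\gamma-1}}$. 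With the choice above, $y_{i+1}^{\tau-2}=y_i^{1-\eta_i}$, so $y_i\,p_i\ge \tfrac{c}{2\beta}\,y_i^{\eta_i}$ after absorbing the sub-polynomial slowly-varying factor. A vertex of $\Gamma_i$ has at least $y_i$ half-edges, so by a Chernoff-type bound applied sequentially to the $y_i$ pairings, the probability that none of them matches into $\Gamma_{i+1}$ is at most $\exp(-y_ip_i/2)$; since the $y_i$ grow doubly exponentially, the sum $\varepsilon_k:=\sum_{i\ge 0}\exp(-y_ip_i/2)$ tends to $0$ as $k=y_0\to\infty$. Moreover, $\sum_i\eta_i<\infty$ yields $\log y_i\sim (\tau-2)^{-i}\log k$, so $b(n)\le \log\log n/|\log(\tau-2)|+O_k(1)$, and in fact $b(n)\le\log\log n/|\log(\tau-2)|$ once $k$ is taken large enough.

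Finally, any two vertices $x,y\in\mathrm{Core}_n$ satisfy $d_xd_y/\mathcal{L}_n\ge n^{2\delta}/\beta$, so a direct binomial computation on the random pairing shows that they are joined by an edge except on an event of probability at most $\exp(-n^{2\delta}/(2\beta))$. Running the ladder from $u$ first and then from $v$ exposes only $O(n^{1/2+\delta})=o(\mathcal{L}_n)$ half-edges in total, so the quenched estimate $p_i$ is valid throughout both explorations. Assembling, the total failure probability is bounded by $2\varepsilon_k+\exp(-n^{2\delta}/(2\beta))+o(1)$, which is less than $\varepsilon_{\ref{uppertight}}$ uniformly in $n$ once $k$ is sufficiently large, yielding \eqref{kbound}.

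The main technical obstacle is the quantitatively uniform control of the slowly-varying perturbation $(\log y)^{\gamma-1}$ in \eqref{condtight} combined with the sequential conditioning of the configuration-model pairing. The $\eta_i$ must be large enough, at every scale $y_i$, to dominate this perturbation in the lower bound for $p_i$, yet their sum must remain small enough that the recursion still terminates after at most $\log\log n/|\log(\tau-2)|$ steps and not one more. Verifying this delicate balance, together with the fact that the number of exposed half-edges stays $o(\mathcal{L}_n)$ throughout the joint exploration from $u$ and $v$ (so that quenched and annealed pairing probabilities agree up to $1+o(1)$), is where the bulk of the bookkeeping will lie.
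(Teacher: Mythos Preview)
Your outline is essentially the paper's own argument: nested layers $\Gamma_i=\{d_w\ge y_i\}$, a per-step connection bound of the form $\exp(-y_i\cdot \text{(half-edge density of }\Gamma_{i+1}))$, and a final edge inside the high-degree core. The only substantive difference is the recursion: the paper takes $y_{i+1}=y_i^{1/(\tau-2+B(\log y_i)^{\gamma-1})}$, i.e.\ it builds the slack directly out of the same slowly-varying correction $(\log y)^{\gamma-1}$ that appears in \eqref{condtight}, which makes the ``absorption'' step you flag (Lemma~\ref{summabilitygi} in the paper) a one-line cancellation rather than a balancing act. Your generic summable $\eta_i$ works too, but note that your sample choice $\eta_i=(1+i)^{-2}$ has $\eta_0=1$, which collapses the recursion to $y_1=1$; take e.g.\ $\eta_i=\tfrac12(1+i)^{-2}$ instead.
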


\begin{remark}[I.i.d.\ vs.\ non-i.i.d.\ degrees]
\normalfont
Note that in Proposition \ref{uppertight}, the degrees of the vertices do not necessarily have to be i.i.d. For the i.i.d.\ case satisfying \eqref{slowlypower}, tightness of $\mathcal{D}_{n}(u,v)-2\log\log n/|\log (\tau-2)|$ has already been proved in \cite{MR2318408}. The extension to the non-i.i.d.\ case is crucial in our analysis.
\end{remark}
Before proving Proposition \ref{uppertight} we state a definition:

\begin{definition}[Nested layers]
\label{def-nested-layers}
Given a graph $G=(V,E)$, a sequence of subsets of $V$, $(A_{i})_{i\in I}$, with $I=[n]$ for some $n$ or $I=\mathbb{N}$, is a \emph{nested sequence of layers} in $G$ if the following two properties hold:
\begin{enumerate}
\item[(1)] $A_{i}\supseteq A_{i+1}$ for all $i\in I$;
\item[(2)] For all $v\in A_{i},$ there exists a vertex $w\in A_{i+1}$ such that $(v,w)\in E$.
\end{enumerate}
\end{definition}
By definition, given a nested sequence of layers in the graph, for any pair $(i,j)$ with $i,j\in I$ and $i<j$, we can define a path of length $j-i$ from any 
vertex in $A_{i}$ to some vertex in $A_{j}$.

\subsection{Proof of Proposition \ref{uppertight}}
To prove Proposition \ref{uppertight} we find a nested sequence of layers in $\CMnd$ intersecting the set of vertices of high degree in $\CMnd$. For this, given a sequence $y_{i}$ of positive integers, we define $\Gamma_{y_{i}}=\left\{v\colon d_v\geq y_{i}\right\}.$
Let 
	\be 
	\Lambda_{\alpha}=\left\{v\colon d_v\geq n^{\alpha}\right\}.
	\ee
Our aim is to prove that for any small $\varepsilon>0$, there exists an increasing sequence $\{y_{i}\}_{i\in I}$, with $y_{0}=k$ s.t. $\Gamma_{y_{i}}$ has the following properties:
\begin{enumerate}\label{nestedseq}
\item $\Gamma_{y_{i}}$ is asymptotically a nested sequence of layers with probability at least $1-\varepsilon$;
\item $|I|<\log\log n/|\log(\tau-2)|$;
\item  $\Gamma_{|I|}$ is w.h.p. connected with the set $\Lambda_{\frac{1}{2}+s}$ for $s>0$ small enough.
\end{enumerate}
For this, we choose a sequence $\{y_{i}\}_{i\geq 0}$ with $y_{0}=k$ that satisfies the following: let $u_{i}$ be a vertex chosen according to the size-biased distribution in $\Gamma_{y_{i}}$ and $E_{i}:=\{u_{i} \text{ is not connected to } \Gamma_{y_{i+1}}\}$. Then $\sum_{i=0}^{b(n)}\Pv(E_{i})<\varepsilon/2$, where $b(n)=|I|$ is less than $\log \log n/|\log(\tau-2)|$ with probability at least $1-\varepsilon/2$.
Let us write $\Pv_{n}(A)=\Pv(A\mid D_{1},\dots D_{n})$ for any event $A$.
We want to prove that
	\be\label{varepsiloni}
	\lim_{k\rightarrow\infty}\sum_{i=0}^{\infty}\Pv_{n}(u_{i}\in\Gamma_{y_{i}},u_{i}\not\to\Gamma_{y_{i+1}}\mid \deg(u_{0})\geq k)=0,
	\ee
 where $u_{i}$ is chosen according to a size-biased distribution from $\Gamma_{y_{i}}$.
Let $S_{y_{i}}$ be  the number of half-edges and $V_{y_{i}}$ be the number of vertices in $\Gamma_{y_{i}}$, respectively. Then 
	\be
	V_{y_{i}}=n(1-F_{n}(y_{i})),
	\ee
and
	\be
	S_{y_{i}}\geq y_{i}n(1-F_{n}(y_{i})).
	\ee 
We consider the sequence $y_{i}$ with $y_{0}=k$. Then,
	\be
	\Pv_n(E_{i})\leq \left(1-\frac{S_{y_{i+1}}}{\mathcal{L}_{n}}\right)^{y_{i}/2}\leq \exp\left\{-\frac{y_{i+1}y_{i}[1-F_{n}(y_{i+1})]}{n\beta}\right\}.
	\ee
Here $\mathcal{L}_{n}\leq \beta n$ is the total number of half-edges in the graph and the factor $y_{i}/2$ in the exponent comes from the worst-case scenario in which we connect all the half-edges of $u_{i}$ to $u_{i+1}$.
We want to prove that
	\be\label{firstcond}
	\sum_{i=0}^{\infty}\exp\Big\{-\frac{y_{i+1}y_{i}[1-F(y_{i+1})]}{n\beta}\Big\}=o_k(1).
	\ee	
For this we introduce the shorthand notation for the absolute value of the exponent in \eqref{firstcond},
	\label{gi}
	\be
	g_{i}(y_{i}, y_{i+1}):=\frac{y_{i+1}y_{i}[1-F(y_{i+1})]}{n\beta}.
	\ee
%We need to choose the sequence $y_{i}$ so that $\sum_{i}\e^{-g_{i}(y_{i}, y_{i+1})}<\infty.$ 
Using \eqref{slowlypower}, we bound $g_{i}$ as
	\be\label{gii}
	g_{i}(y_{i}, y_{i+1})\geq \tilde{c}y_{i+1}^{2-\tau  -C(\log(y_{i+1}))^{\gamma-1}}y_{i},
	\ee
where $C$ is defined in \eqref{slowlypower} and $\tilde{c}>0.$
We would like to choose the sequence $(y_i)_{i\ge 1}$ so that \eqref{firstcond} holds and then 
we can choose $k=k(\varepsilon)$ in Proposition \ref{uppertight} large enough so that
\be\label{sum:gi}	
\sum_{i=0}^{b(n)}\Pv_{n}(u_{i}\in\Gamma_{y_{i}}\not\to\Gamma_{y_{i+1}}\mid \deg(u_{0})>k)\leq\sum_{i=0}^{b(n)}\e^{-g_{i}(y_{i}, y_{i+1})}<\frac{\varepsilon}{2}
	\ee
	holds, where $b(n)=|I|$ is the total number of layers involved. We claim that a sequence satisfying these conditions is given recursively by
	\be\label{recursive}
	y_{0}=k,\quad y_{i+1}={y_{i}}^{(\tau-2+B(\log(y_{i}))^{\gamma-1})^{-1}},
	\ee
with $\gamma$ as in \eqref{slowlypower} and $B>0$ to be defined later on.
We give upper and lower bounds on $(y_{i})_{i\geq 0}$ in the following lemma:
\begin{lemma}\label{yibounds}
For every $\delta>0$ small enough, there exists $k\in \mathbb{N}$ such that, if $y_0\geq k$,
	\be\label{logbound}
	k^{({\frac{1}{\tau-2+\delta}})^{i}}\leq y_i \leq k^{({\frac{1}{\tau-2})}^{i}}.
	\ee
\end{lemma}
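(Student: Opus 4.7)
The plan is to take logarithms so that the multiplicative recursion \eqref{recursive} becomes a simple one-step contraction/expansion, and then to use monotonicity of the iterates to control the correction term $B(\log y_i)^{\gamma-1}$ uniformly in $i$. Let $\ell_i:=\log y_i$. Taking $\log$ on both sides of \eqref{recursive} yields the recursion
\be
\label{eq:plan-ell}
\ell_{i+1}=\frac{\ell_i}{\tau-2+B\,\ell_i^{\gamma-1}},\qquad \ell_0=\log k.
\ee
Since $\tau-2\in(0,1)$ and $B\,\ell_i^{\gamma-1}>0$, the denominator lies in $(\tau-2,\,1)$ for $k$ large, so $(\ell_i)_{i\ge 0}$ is strictly increasing; in particular $\ell_i\ge \ell_0=\log k$ for all $i\ge 0$, and consequently $B\,\ell_i^{\gamma-1}\le B\,(\log k)^{\gamma-1}$ for all $i\ge 0$ because $\gamma-1<0$.

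For the upper bound in \eqref{logbound}, I would simply drop the positive correction term from the denominator in \eqref{eq:plan-ell}: $\ell_{i+1}\le \ell_i/(\tau-2)$. Iterating this bound $i$ times gives $\ell_i\le \ell_0/(\tau-2)^i=\log k\cdot (\tau-2)^{-i}$, which after exponentiation is exactly the upper bound $y_i\le k^{(1/(\tau-2))^i}$.

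For the lower bound, fix $\delta>0$ small. I would choose $k$ so large that $B\,(\log k)^{\gamma-1}\le \delta$; this is possible since $\gamma-1<0$, and concretely $k\ge \exp\!\bigl((B/\delta)^{1/(1-\gamma)}\bigr)$ suffices. By the monotonicity argument in the first paragraph, this implies $B\,\ell_i^{\gamma-1}\le \delta$ for every $i\ge 0$. Plugging this into \eqref{eq:plan-ell} gives $\ell_{i+1}\ge \ell_i/(\tau-2+\delta)$, and iterating produces $\ell_i\ge \log k\cdot (\tau-2+\delta)^{-i}$, i.e.\ the desired lower bound $y_i\ge k^{(1/(\tau-2+\delta))^i}$.

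There is no serious obstacle: the only mildly delicate point is to make sure the correction $B\,\ell_i^{\gamma-1}$ stays below $\delta$ for \emph{all} $i\ge 0$, not just $i=0$, but this is automatic from monotonicity of $(\ell_i)$ together with $\gamma-1<0$. Hence a single smallness condition on $k$ (depending only on $B$, $\gamma$ and $\delta$) propagates through the induction and closes the argument.
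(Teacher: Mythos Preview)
Your proof is correct and follows essentially the same approach as the paper: both arguments use monotonicity of the iterates to bound the correction term $B(\log y_i)^{\gamma-1}$ uniformly by $B(\log k)^{\gamma-1}\le \delta$, then iterate the resulting one-step inequalities $y_{i+1}\le y_i^{1/(\tau-2)}$ and $y_{i+1}\ge y_i^{1/(\tau-2+\delta)}$. The only cosmetic difference is that you introduce $\ell_i=\log y_i$ explicitly, whereas the paper works directly with the exponents.
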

\begin{proof}

Note that the sequence $y_i$ is monotone increasing, while, if $y_0>k_0$ for $k_0$ sufficiently large, it holds
 	\be\label{monoth}
 \tau-2+\frac{B}{(\log y_{i})^{1-\gamma}}< \tau-2+\frac{B}{(\log y_{0})^{1-\gamma}}<1.
 	\ee
For a choice of $y_{0}$ satisfying the second inequality in \eqref{monoth}, we define $\delta:=\frac{B}{(\log y_0)^{1-\gamma}}$.
We now get a lower bound on $y_{i}$ using
	\be
	y_{i+1}\geq y_{i}^{\frac{1}{\tau-2+\delta}}\geq \dots \geq y_{0}^{(\frac{1}{\tau-2+\delta})^{i}}=k^{(\frac{1}{\tau-2+\delta})^{i}},
	\ee
while an upper bound is obtained by omitting the term $B\log (y_{i})^{\gamma-1}$, which is non-negative, in the exponent, so that, recursively,
	\be
	y_{i+1}\leq y_i^{\frac{1}{\tau-2}}\leq \dots \leq y_0^{(\frac{1}{\tau-2})^{i}}= k^{(\frac{1}{\tau-2})^{i}}.
	\ee
This concludes the proof of Lemma \ref{yibounds}.
\end{proof}
\medskip

We now prove that  \eqref{firstcond} holds for $\{y_i\}_{i\geq 0}$ as in \eqref{recursive}:
\begin{lemma}\label{summabilitygi}
For an appropriate choice of $B$ in \eqref{recursive}, with $y_{0}=k,$
	\be
	\lim_{k\rightarrow\infty}\sum_{i=0}^{\infty}\e^{-g_i(y_{i},y_{i+1})}=0.
	\ee
\end{lemma}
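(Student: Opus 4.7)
The plan is to convert \eqref{gii} into a lower bound on $g_i$ that grows double-exponentially in $i$, so the series $\sum_i \mathrm{e}^{-g_i}$ is controlled by its first term, which tends to $0$ as $k \to \infty$.

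First, I would use the recursion \eqref{recursive} to eliminate $y_i$ from the right-hand side of \eqref{gii}. From \eqref{recursive} we have $y_{i+1}^{\tau-2+B(\log y_i)^{\gamma-1}}=y_i$, which rearranges to $y_{i+1}^{2-\tau}\, y_i = y_{i+1}^{B(\log y_i)^{\gamma-1}}$. Substituting this into \eqref{gii} yields
\be
g_i(y_i,y_{i+1}) \;\ge\; \tilde c \, y_{i+1}^{B(\log y_i)^{\gamma-1}-C(\log y_{i+1})^{\gamma-1}}
 \;=\; \tilde c \, \exp\!\Big( B(\log y_i)^{\gamma-1}\log y_{i+1} \;-\; C(\log y_{i+1})^{\gamma}\Big).
\ee
The goal is to choose $B$ large enough that the exponent is positive and grows like a positive power of $\log y_i$. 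Since the recursion gives $\log y_{i+1}=\log y_i/(\tau-2+B(\log y_i)^{\gamma-1})$, one has $\log y_i/\log y_{i+1}\to \tau-2$ as $y_i\to\infty$; hence choosing $B>C(\tau-2)^{1-\gamma}$ (noting $(\tau-2)^{1-\gamma}<1$ because $\tau-2<1$ and $\gamma\in(0,1)$) makes the bracketed quantity asymptotic to a positive constant multiple of $(\log y_i)^{\gamma}$ for all sufficiently large $y_i$. Concretely, one obtains, for some $\tilde B>0$ and all $k$ large,
\be
g_i(y_i,y_{i+1}) \;\ge\; \tilde c \,\exp\!\big(\tilde B\,(\log y_i)^{\gamma}\big).
\ee

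Next, I would invoke the lower bound from Lemma \ref{yibounds}, $y_i\ge k^{(\tau-2+\delta)^{-i}}$, to get $(\log y_i)^{\gamma}\ge (\log k)^{\gamma}\,a^{i}$, with $a=(\tau-2+\delta)^{-\gamma}>1$. This produces the double-exponential bound
\be
g_i(y_i,y_{i+1}) \;\ge\; \tilde c\,\exp\!\big(\tilde B\,(\log k)^{\gamma}\,a^{i}\big).
\ee
In particular, for $k$ large enough, $g_{i+1}\ge 2 g_i$, so $\mathrm{e}^{-g_{i+1}}\le \mathrm{e}^{-g_i}/\mathrm{e}^{g_i}$ and the series $\sum_i \mathrm{e}^{-g_i}$ is geometrically summable from the first term. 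Therefore
\be
\sum_{i=0}^{\infty} \mathrm{e}^{-g_i(y_i,y_{i+1})} \;\le\; 2\,\exp\!\Big(-\tilde c\,\exp\!\big(\tilde B (\log k)^{\gamma}\big)\Big)\;\xrightarrow[k\to\infty]{} 0,
\ee
which is the desired conclusion.

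I expect the main subtlety to lie in the calibration of $B$ in step one: the constant $C$ in \eqref{slowlypower} and the constant in the exponent of $g_i$ both carry an unknown $(\log y_{i+1})^{\gamma-1}$ correction, so one must verify that the choice $B>C(\tau-2)^{1-\gamma}$, possibly slightly enlarged to absorb lower-order terms in the ratio $\log y_i/\log y_{i+1}$, produces a strictly positive exponent uniformly in $i$. Once this calibration is done, the double-exponential growth makes the remainder of the argument essentially immediate.
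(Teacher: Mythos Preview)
Your proposal is correct and follows essentially the same route as the paper: both substitute the recursion \eqref{recursive} into the lower bound \eqref{gii} to obtain $g_i\ge \tilde c\exp\!\big(\text{const}\cdot(\log y_i)^\gamma\big)$, and then invoke the lower bound of Lemma~\ref{yibounds} to get double-exponential growth in $i$. The only cosmetic differences are that the paper writes the exponent in terms of $y_i$ rather than $y_{i+1}$ and uses the cruder calibration $B>2C$ (which avoids your asymptotic analysis of $\log y_i/\log y_{i+1}$ via the monotonicity $(\log y_{i+1})^{\gamma-1}\le(\log y_i)^{\gamma-1}$), whereas your sharper threshold $B>C(\tau-2)^{1-\gamma}$ also works once $k$ is large enough.
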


\begin{proof}
We use the lower bound on $g_{i}(y_{i},y_{i+1})$ in \eqref{gii},
	%\be
 	%g_i(y_{i},y_{i+1})\geq \tilde{c}y_{i+1}^{2-\tau  -C(\log(y_{i+1}))^{\gamma-1}}y_{i},
 	%\ee
 and we replace $y_{i+1}$ with the recursion in \eqref{recursive}. Then we obtain
  	\be
	g_{i}(y_{i},y_{i+1})\geq \tilde{c}y_{i}^{-\frac{\tau-2+C(\log (y_{i+1}))^{\gamma-1}}{\tau-2+B(\log (y_{i}))^{\gamma-1}}+1}.
	\ee
Since $\gamma<1$, using the lower bound on $y_{i}$ in \eqref{logbound}, we see that $B(\log y_{i})^{\gamma-1}<\delta'$ for some $\delta'< 1-(\tau-2)$. Then, elementary calculation yields that
		\be
		g_{i}(y_{i},y_{i+1})\geq \tilde{c}y_{i}^{\frac{B(\log y_{i})^{\gamma-1}-C(\log y_{i+1})^{\gamma-1}}{\tau-2+\delta'}}.
		\ee
We investigate the numerator on the rhs. Since $y_{i}$ is monotone increasing, for the choice of $B>2C$ we get that
	\begin{equation*}
	B(\log y_{i})^{\gamma-1}-C(\log y_{i+1})^{\gamma-1}\geq 2C(\log y_{i})^{\gamma-1}-C(\log y_{i})^{\gamma-1}\geq C(\log y_{i})^{\gamma-1}.
	\end{equation*}
Using this bound in the numerator, and then the lower bound on $y_{i}$ in Lemma \ref{logbound}, we obtain
	\be
	g_{i}(y_{i},y_{i+1})\geq \tilde{c}\exp\Big\{C(\log y_{i})^{\gamma}\Big\}\ge  \tilde{c}\exp\Big\{C(\log k)^{\gamma}\Big(\frac{1}{\tau-2+\delta}\Big)^{i\gamma}\Big\}.
\ee
%Then, using the lower bound for $y_{i}$ in Lemma \ref{logbound}, we obtain
%	\begin{align}
%	g_{i}(y_{i},y_{i+1})%&\geq \tilde{c}\exp\Big\{C(\log y_{i})^{\gamma}\Big\}\\
%	%&\geq \tilde{c}\exp\Big\{C\Big(\frac{1}{\tau-2+\delta}\Big)^{i}(\log k)^{\gamma}\Big\}\\
%	&\geq \tilde{c}\exp\Big\{C(\log k)^{\gamma}\Big(\frac{1}{\tau-2+\delta}\Big)^{i\gamma}\Big\}.
%	\end{align}
Note that $\tau-2+\delta<1,$ so that $\exp\{-g_{i}(y_{i},y_{i+1})\}$ is summable in $i$.
Finally, also note that, since $\gamma>0$ and $y_{0}=k$,
		\be 
		\sum_{i=0}^{\infty}\exp\{-g_{i}(y_{i},y_{i+1})\}\rightarrow 0,
		\ee 
as $k\rightarrow \infty$, establishing the statement of Lemma \ref{summabilitygi} as well as \eqref{firstcond}.
\end{proof}
%So, $k$ can be chosen large enough such that
% 	\be\label{eq:lemwithve}
% 	\sum_{i=0}^{b(n)}\Pv_{n}(u_{i}\in\Gamma_{y_{i}}\not\to\Gamma_{y_{i+1}}\mid \deg{u_{0}}>k)\leq\sum_{i=0}^{b(n)}\e^{-g_{i}(y_{i},y_{i+1})}\le \ve/2.	\ee
We want to give an upper bound on $b(n)$, i.e., the index $i$ for which $\Gamma_{i}$ is a subset of $\Lambda_\al$, the set of vertices of high degree. 
For this, we refine the lower bound in Lemma \ref{yibounds} in the next lemma:

\begin{lemma}\label{stim}
There exists $\delta>0$ such that, for $k$ sufficiently large,
	\be
	y_{i}\geq (y_{0}^{1-\delta})^{(\tau-2)^{-i}}.
	\ee
\end{lemma}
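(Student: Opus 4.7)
The plan is to reduce everything to a telescoping estimate on the logarithms. Set $a_i := \log y_i$, so the recursion \eqref{recursive} becomes
\be
a_{i+1} = \frac{a_i}{\tau-2+Ba_i^{\gamma-1}}.
\ee
The desired inequality $y_i \geq (y_0^{1-\delta})^{(\tau-2)^{-i}}$ is equivalent, after taking logarithms, to the statement that the auxiliary sequence $b_i := a_i(\tau-2)^i$ satisfies $b_i \geq (1-\delta)a_0$ for every $i \geq 0$. So I will reformulate the claim in terms of the $b_i$ and show the $b_i$ barely shrink from their initial value when $k$ is large.

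Next I would compute the ratio explicitly. From the recursion one has
\be
\frac{b_{i+1}}{b_i} \;=\; \frac{\tau-2}{\tau-2+Ba_i^{\gamma-1}} \;=\; \frac{1}{1+\frac{B a_i^{\gamma-1}}{\tau-2}},
\ee
so that, telescoping,
\be
\log b_i \;=\; \log a_0 \;-\; \sum_{j=0}^{i-1}\log\!\Bigl(1+\frac{Ba_j^{\gamma-1}}{\tau-2}\Bigr).
\ee
Using $\log(1+x)\le x$ for $x\ge 0$, it suffices to bound the tail sum $\sum_{j\ge 0} a_j^{\gamma-1}$ by a small constant.

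To handle this sum I invoke the lower bound already established in Lemma \ref{yibounds}: for every small $\delta'>0$ and $k$ large enough, $a_j \geq a_0 (\tau-2+\delta')^{-j}$. Since $\gamma-1<0$ and $\tau-2+\delta'<1$ (choosing $\delta'$ small), this yields
\be
a_j^{\gamma-1} \;\leq\; a_0^{\gamma-1}(\tau-2+\delta')^{(1-\gamma)j},
\ee
which is geometric in $j$ with ratio strictly less than $1$. Therefore
\be
\sum_{j=0}^{\infty}a_j^{\gamma-1} \;\leq\; \frac{a_0^{\gamma-1}}{1-(\tau-2+\delta')^{1-\gamma}} \;=\; \frac{C'}{(\log k)^{1-\gamma}},
\ee
which tends to $0$ as $k\to\infty$.

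Consequently, given any prescribed $\delta\in(0,1)$, I can choose $k$ so large that the total tail sum is at most $\log\bigl(1/(1-\delta)\bigr)$; then $\log b_i \geq \log a_0 + \log(1-\delta)$ for every $i\ge 0$, i.e., $b_i \geq (1-\delta) a_0$, which is precisely the claim. There is no real obstacle: the one point to be careful about is the circularity in applying Lemma \ref{yibounds}, since that lemma itself uses a small parameter $\delta$; but $\delta'$ here plays a purely technical role (it only needs to satisfy $\tau-2+\delta'<1$) and can be fixed once and for all, independently of the final $\delta$ we want in the refined bound.
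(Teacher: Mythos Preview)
Your proof is correct and follows essentially the same approach as the paper. Both arguments write the recursion multiplicatively in terms of the exponents (the paper sets $y_i=y_0^{a_i}$ and studies the product $\prod_j (\tau-2+B(\log y_j)^{\gamma-1})^{-1}$, while you take $a_i=\log y_i$ and telescope $\log b_i$), then invoke the lower bound of Lemma~\ref{yibounds} to dominate the correction terms by a convergent geometric series with ratio $(\tau-2+\delta')^{1-\gamma}<1$, and finally observe that the total correction is $O((\log k)^{\gamma-1})\to 0$ so that the effective exponent is at least $1-\delta$ for $k$ large. Your use of $\log(1+x)\le x$ is a minor cosmetic simplification over the paper's direct product estimate, but the substance is identical.
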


\begin{proof}
Let $y_{i+1}=y_{0}^{a_{i+1}}$, so that by \eqref{recursive},
%	\be
%	y_{i+1}=y_{0}^{a_{i+1}}=(y_{i}^{a_{i}})^{\frac{1}{\tau-2+B(\log y_{i})^{\gamma-1}}},
%	\ee
%which implies
	\be
	a_{i+1}=\frac{a_{i}}{\tau-2+B(\log y_{i})^{\gamma-1}}=\dots=\prod_{j=0}^{i}\Big(\tau-2+B(\log y_{j})^{\gamma-1}\Big)^{-1}.
	\ee
Using the lower bound on $y_i$ in Lemma \ref{yibounds} we get
	\be\label{233}
	a_{i+1}\geq (\tau-2)^{-(i+1)}\prod_{j=0}^{i}\Big( 1+(\tau-2+\delta)^{-j(\gamma-1)}\frac{B(\log k)^{\gamma-1}}{\tau-2}   \Big)^{-1}.	
	\ee
 The convergence of the second product on the rhs of \eqref{233} for any fixed $k$
%	\be
%	\prod_{j=1}^{i+1}\Big( 1+ (\tau-2+\delta)^{j(1-\gamma)}\frac{B(\log k)^{1-\gamma}}{\tau-2}\Big)
%	\ee 
is equivalent to the convergence of the series  
	\be 
	(B\log k)^{1-\gamma}\sum_{j=0}^{i}(\tau-2+\delta)^{j(1-\gamma)},
	\ee 
and this series converges because $\tau-2+\delta<1$ for $\delta>0$ sufficiently small.
So, let us write
	\be
	M_{k}^{-1}:=\lim_{i\rightarrow\infty}\prod_{j=1}^{i}\Big( 1+ (\tau-2+\delta)^{j(1-\gamma)}\frac{(B\log k)^{1-\gamma}}{\tau-2}\Big),
	\ee
and since the partial products on the rhs increase to the limit $M_k^{-1}$, we obtain the lower bound
	\be
	a_{i+1}\geq \frac{1}{(\tau-2)^{i+1}}M_k.	
	\ee
Further observe that due to $\gamma<1$, $M_k=1+o_{k}(1)$, therefore, using the form $y_{i}=y_0^{a_i}$ again, 
	\be
	y_{i}\geq \Big(y_{0}^{M_{k}}\Big)^{\frac{1}{(\tau-2)^{i}}}\geq (y_{0}^{1-\delta})^{\frac{1}{(\tau-2)^{i}}}	
	\ee
for some $\delta>0$ that can be taken arbitrarily small by taking $k$ so large that $M_k\geq 1-\delta$.
\end{proof}

Now we have all the preliminaries to complete the proof of Proposition \ref{uppertight}.
\begin{proof}[Proof of Proposition \ref{uppertight}]
By the condition in Proposition \ref{uppertight}, there are some vertices in the graph of degree $n^{\al}$, for $\al>1/2$. 
As a consequence of Lemma \ref{stim}, the number of layers needed to reach the highest degree vertices in $\Lambda_\al$ has $b(n)$ as an upper bound\footnote{This is an upper bound since the true number is an integer, while the solution of \eqref{eq:ibar-def} is not necessarily}, where $b(n)-1$ is the solution of
	\be\label{eq:ibar-def}
	(y_{0}^{1-\delta})^{(\frac{1}{\tau-2})^{i}}=n^{\al},
	\ee
that is, by elementary calculations,
	\be\label{stepmax}
	b(n)\le \frac{\log\Big(\log(n^{\al})/\log y_{0}^{1-\delta}\Big)}{\log(\frac{1}{\tau-2})}+1= \frac{\log\log n+\log \al-\log(\log (y_{0}^{1-\delta}))}{|\log(\tau-2)|}+1.
	\ee
Then, for $y_0=k$ sufficiently large,
	\be
	b(n)\leq \frac{\log\log n}{|\log(\tau-2)|}.	
	\ee
By Lemma \ref{summabilitygi}, for all $k\ge k_0(\ve)$,
	\be\label{connect-111}
	\sum_{i=0}^{b(n)}\Pv_{n}(u_{i}\in\Gamma_{y_{i}}\not\to\Gamma_{y_{i+1}})\le \sum_{i=0}^{\infty}\e^{-g_{i}(y_i,y_{i+1})}=o_k(1).
	\ee
As a consequence of \eqref{connect-111}, w.h.p., we can connect the vertex $u$ and $v$ with degree $k$, to the set $\Lambda_{\alpha}$ with probability $1-\ve/2$ in at most $b(n)$ steps.
As a consequence of Lemma  \cite[Lemma 5]{BhaHofHoo12}, $\Lambda_{\alpha}$ is a complete graph, we can connect $u_{k}$ and $v_{k}$ in $2b(n)+1$ steps.
We then have
	\be
	\Pv_n(\mathcal{D}_{n}(u,v)-\frac{2\log\log n}{|\log (\tau-2)|}>1\mid d_{u}\geq k, d_{v}\geq k)<o_{k}(1),	
	\ee
that is
	\be
	\sup_{n\geq 1}\Pv_n(\mathcal{D}_{n}(u,v)-\frac{2\log\log n}{|\log (\tau-2)|}>1|d_{u}\geq k, d_{v}\geq k)<o_{k}(1).	
	\ee
We choose $k$ in such a way that $o_{k}(1)<\varepsilon_{\ref{uppertight}}$.
This ends the proof of Proposition \ref{uppertight}.
\end{proof}

\section{Degree percolation}
\label{sec-deg-perc}
In this section our goal is to define degree-dependent percolation on the configuration model. This means that we keep each edge with a probability that depends on the degree of the two vertices the edge is adjacent to. In what follows, we explain two different ways to do this, and show that they are in fact equivalent.
Let
	\be\label{pd}
	p(d):\mathbb{N}\longrightarrow[0,1]
	\ee
be a decreasing monotone function of $d$. Later $p(d)$ will equal probability of keeping a half-edge that is attached to a vertex with degree $d$.
 We now define two different degree-dependent percolation methods given a function $p(d)$.
 If $s$ denotes a half-edge, then we shortly write $p(s)=p(d_{v(s)})$, where $v(s)$ is the vertex that the half-edge $s$ is attached to and $d_{v(s)}$ is its degree.
\begin{definition}[Degree-dependent half-edge percolation]\label{halfper}
Given a half-edge $s$ and a degree sequence $\boldsymbol{d}=(d_{1},\dots,d_{n})$, we keep the half-edge $s$ with probability $p(s)$, independently of all other half-edges.
Further, on the event of not keeping a half-edge $s$, we create a new ``artificial'' vertex of degree $1$ with one half-edge that corresponds to $s$.
We call the original vertices ``regular" and the new vertices ``artificial", and we say that a half-edge is ``regular" or  ``artificial" if it is  attached to a 
regular vertex or to an artificial vertex, respectively.

After this procedure is performed for all half-edges in the graph, we start pairing  the regular half-edges as in the configuration model, and their pairs could be both regular or artificial. When all the regular half-edges have been paired, from the graph that we obtained we remove the artificial vertices, the edges connected to them and the artificial half-edges that have not been paired. We call $\CMnd_{p(d)}$ the obtained graph.
Let $\boldsymbol{d^{r}}=({d}_{1}^{r},\dots,{d}_{n}^{r})$ be the sequence of regular half-edges attached to the regular vertices and let $\boldsymbol{d^{rr}}=({d}_{1}^{rr},\dots,{d}_{n}^{rr})$ be the final degrees of the vertices in $\CMnd_{p(d)}$, that is, the degree sequence in which we count only those regular half-edges that are paired to regular half-edges.
\end{definition}

\begin{definition}[Degree-dependent half-edge percolation]\label{edgeper}
We start with $\CMnd$ (that is the resulting graph after all the half-edges have been paired) and we independently keep an edge $(u,v)$ with probability $p(d_{u})p(d_{v})$, where $d_{u}$ and $d_{v}$ are the degrees of $u$ and $v$.
We call $\widetilde{\mathrm{CM}}_n(\boldsymbol{d})_{p(d)}$ the resulting graph.
\end{definition}

In what follows, our goal is to show that the two different percolation methods result in two random graphs that have the same distribution.
As a preparation to show this, we will calculate the probability of a matching to occur in both percolation methods. We adapt an argument by Janson \cite{Janson:arXiv0804.1656}, who studies various types of percolation on random graphs, including degree-dependent site percolation. Let $\boldsymbol{d}=(d_1,\dots, d_n)$ be a fixed degree sequence, and recall that ${\mathcal{L}}_{n}=\sum_{i\in[n]}d_{i}$ denotes the total degree. Let $1\le s_i \le {\mathcal{L}}_{n}$ be different half-edges  and let us define
	\be
	\label{matching}
	M:=\{(s_{1},s_{2}),(s_{3},s_{4}),\dots ,(s_{2k-1},s_{2k})\},
	\ee
with $2k\leq {\mathcal{L}}_{n}$ a matching, i.e., a sequence of pairs of half-edges.
We would like to calculate the probability of seeing the matching $M$ in $\CMnd_{p(d)}$ and also that in $\widetilde{\mathrm{CM}}_n(\boldsymbol{d})_{p(d)}$. Note that in $\CMnd_{p(d)}$ all the half-edges in $M$ must be regular, so
 we call such a matching a \emph{regular matching}.
Let $E_{\sss M}$ be the event to have the matching $M$ in the graph model under consideration.
We denote by $\Pv$ the measure on the $\sigma-$algebra generated by $\CMnd_{p(d)}$ and $\widetilde{\Pv}$ the measure on the $\sigma-$algebra generated by $\widetilde{\mathrm{CM}}_n(\boldsymbol{d})_{p(d)}$. We have the following equality in law:

\begin{lemma}
Fix a degree sequence $\boldsymbol{d}=(d_1,\dots, d_n)$. Then, for any matching $M$ as in \eqref{matching},
	\be
	\widetilde{\Pv}(E_{\sss M})=\Pv(E_{\sss M}\cap \{M\text{is a regular matching}\}).
	\ee
\end{lemma}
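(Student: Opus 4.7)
The plan is to compute each side explicitly and to show they coincide as products of a pairing factor and a Bernoulli retention factor. Both percolation schemes decouple, in different orders, into a uniform configuration-model pairing and independent half-edge ``keep/delete'' trials, so this factorisation is the heart of the matter.

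First I would compute $\widetilde{\Pv}(E_{\sss M})$. The standard configuration-model fact is that for any fixed set of distinct half-edges $s_{1},\dots,s_{2k}$, the probability they are paired precisely as in $M$ equals $\prod_{i=0}^{k-1}(\mathcal{L}_{n}-2i-1)^{-1}$. By Definition \ref{edgeper}, conditionally on the pairing each edge of $M$ is independently retained with probability $p(s_{2i-1})p(s_{2i})$, and these Bernoulli trials are independent of the pairing. Therefore
\[
\widetilde{\Pv}(E_{\sss M})=\prod_{i=0}^{k-1}\frac{1}{\mathcal{L}_{n}-2i-1}\cdot\prod_{j=1}^{2k}p(s_{j}).
\]

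Next I would compute $\Pv(E_{\sss M}\cap\{M \text{ is a regular matching}\})$ in the half-edge percolation model of Definition \ref{halfper}. The event that $M$ is a regular matching is $\bigcap_{j=1}^{2k}\{s_{j}\text{ is regular}\}$, and the regular/artificial labels are assigned independently across half-edges, giving probability $\prod_{j=1}^{2k}p(s_{j})$. Conditional on this event, the edges of $M$ appear in $\CMnd_{p(d)}$ exactly when $s_{2i-1}$ is paired to $s_{2i}$ for every $i=1,\dots,k$. Since the pairing step in Definition \ref{halfper} produces a uniform random matching on the underlying $\mathcal{L}_{n}$ half-edges and is independent of which half-edges were labelled regular, this conditional probability is again $\prod_{i=0}^{k-1}(\mathcal{L}_{n}-2i-1)^{-1}$. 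Multiplying the two factors yields the desired equality.

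The one point that requires care, and which I expect to be the main obstacle, is justifying independence between the regular/artificial labelling and the matching in the half-edge percolation model as written. This can be cleanly argued by noting that Definition \ref{halfper} is distributionally equivalent to the following two-step construction: first pair all $\mathcal{L}_{n}$ half-edges uniformly at random as in the unpercolated configuration model, and only afterwards apply the independent Bernoulli percolation labels, discarding any edge meeting an artificial half-edge. Because the labelling plays no role in the pairing step of this equivalent formulation, the required independence follows, and combining it with the two explicit factorisations above gives the lemma.
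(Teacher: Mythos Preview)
Your proof is correct and arrives at the same product formula as the paper, namely $\prod_{j=1}^{2k}p(s_j)\cdot\prod_{i=0}^{k-1}(\mathcal{L}_n-2i-1)^{-1}$ for both sides. The overall strategy---compute each side explicitly and match the two factorisations---is the same as the paper's.

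The presentation differs in one respect worth noting. The paper works with \emph{ordered} matchings $E_k(M)$ and computes $\Pv(E_k^{\sss R}(M))$ by induction on $k$: at each step the next two half-edges must be regular (probability $p(s_{2k-1})p(s_{2k})$) and must be paired to each other (probability $1/(\mathcal{L}_n-2k+1)$, using that the total half-edge count is preserved by the addition of artificial vertices). This sequential revealing sidesteps any explicit independence statement. You instead invoke the standard configuration-model matching probability in one shot and then argue separately that the Bernoulli labelling is independent of the pairing, justifying this via the equivalent ``match first, then label'' formulation. That reformulation is valid---one can always extend the partial matching of Definition~\ref{halfper} to a full uniform matching by continuing to pair the leftover artificial half-edges, and the resulting full matching is uniform and independent of the labels---but the paper's inductive computation avoids having to articulate it. Both routes are short; yours is slightly more structural, the paper's slightly more elementary.
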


\begin{proof}
Note that the matching $M$ as in \eqref{matching} does not specify the order in which the half-edges are paired.
So, let $2k\leq {\mathcal{L}}_{n}$ and  $1\leq s_{i}\leq {\mathcal{L}}_{n}$ be an ordered sequence of half-edges
and let us define
	\begin{equation*}
	E_{k}(M):=(E_{s_{1}s_{2}})_{1}\cap (E_{s_{3}s_{4}})_{2}\dots \cap (E_{s_{2k-1}s_{2k}})_{k}
	\end{equation*}
to be the event that half-edges $s_{i}$ are matched to each other in exactly this order.
We let
	\be
	E_{k}^{\sss R}(M):=E_{k}\cap\{s_{1},s_{2},\dots, s_{2k}\text{ are regular half-edges}\}
	\ee
be the event that $E_k(M)$ happens and all the half-edges in the matching are regular.
We want to show that $\Pv(E_{k}^{\sss R}(M))=\widetilde{\Pv}(E_{k}(M))$
for any $k\leq {\mathcal{L}}_{n}/2$.
First we calculate the probability of $E_k^{\sss R}(M)$ in the degree-dependent half-edge percolation model, that is in $\CMnd_{p(d)}$. We calculate the probability by induction. To initialize, we start with $k=1$ and we compute
	\be
	\Pv(E_{1}^{\sss R}(M))=\Pv\big((E_{s_{1}s_{2}})_{1}\big)=\Pv(\{s_{1}, s_{2} \text{\,regular\,} \}\cap\{s_{1} \text{\,is paired to\,} s_{2}\})
	=p(s_{1})p(s_{2})\frac{1}{{\mathcal{L}}_{n}-1},
	\ee
where we have used that the probability that $s_i$ is regular is $p(s_i)$, independently for all half-edges $s_i$. Further, we have also used that the total number of half-edges is still ${\mathcal{L}}_{n}$ after the degree-dependent half-edge percolation, because of the addition of the artificial vertices. Now notice that, for general $k$,
	\be
	\Pv(E_{k}^{\sss R}(M))=\Pv(E_{k}^{\sss R}(M)\mid E_{k-1}^{\sss R}(M))\Pv(E_{k-1}^{\sss R}(M))=\frac{1}{{\mathcal{L}}_{n}-2k-1}p(s_{2k-1})p(s_{2k})\Pv(E_{k-1}^{\sss R}(M)).
	\ee
Indeed, once we have the matching $E_{k-1}^{\sss R}(M)$, the next two half-edges $s_{2k-1}$ and $s_{2k}$ must be regular: this happens with probability $p(s_{2k-1})p(s_{2k})$, and these two half-edges must be paired to each other. Since we already paired $2k-2$ half-edges, the conditional probability of pairing $s_{2k-1}$ to $s_{2k}$ is $1/({\mathcal{L}}_{n}-2k-1)$.
We obtain that
	\be
	\label{pekr} 
	\Pv(E_{k}^{\sss R}(M))= \prod_{i=1}^{2k} p(s_i) \prod_{i=1}^{k} \frac{1}{{\mathcal{L}}_{n}-2i-1}.
	\ee
We now determine the probability of $E_k(M)$ in $\widetilde{\mathrm{CM}}_n(\boldsymbol{d})_{p(d)}$.
Notice that here we do the matching first, so that the $i$th pair of half-edges are matched to each other with probability $1/({\mathcal{L}}_{n} - 2i -1)$. Then, conditionally that the edge $(s_{2i-1}, s_{2i})$ is formed, we perform the degree-dependent half-edge percolation on this matching independently between different edges. Thus, the probability that we keep the edge $(s_{2i-1}, s_{2i})$ is precisely $p(s_{2i-1})p(s_{2i})$, independently for different values of $i$.
As a result,
	\be	
    	\widetilde{\Pv}(E_{k}(M))=\prod_{i=1}^{k}\frac{1}{{\mathcal{L}}_{n}-2k-1} \prod_{i=1}^{k}p(s_{2i-1})p(s_{2i}),
	\ee
which is exactly the same as the formula in \eqref{pekr}. This finishes the proof.
\end{proof}
We obtain the following immediate corollary:
\begin{corollary}[Equality in law of degree-dependent percolations]
Let $p(d)$ be a function as in \eqref{pd} and let $\mathrm{CM}_n(\boldsymbol{d})_{p(d)}$, $\widetilde{\mathrm{CM}}_n(\boldsymbol{d})_{p(d)}$ be the resulting graphs of the degree-dependent half-edge percolation as described in Definition \ref{halfper} and the degree-dependent edge percolation as in Definition \ref{edgeper}, respectively. Then
$\mathrm{CM}_n(\boldsymbol{d})_{p(d)}\overset{d}{=}\widetilde{\mathrm{CM}}_n(\boldsymbol{d})_{p(d)}$.
\end{corollary}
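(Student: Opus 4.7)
The plan is to derive the corollary as an immediate consequence of the preceding lemma by a soft measure-theoretic argument: the law of a random (multi)graph on the labeled vertex set $[n]$ is determined by the probabilities of the cylinder events $E_{M}=\{M\subseteq E(G)\}$ as $M$ ranges over all half-edge matchings of the form \eqref{matching}, and equality of these probabilities between the two models is precisely what the lemma provides.

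First, I would identify the sample space on which both constructions live. In the half-edge percolation model $\CMnd_{p(d)}$, after removing artificial vertices and their incident edges, the remaining edges are exactly those whose two endpoints are both regular, that is, a matching on the regular half-edges. In the edge percolation model $\widetilde{\CMnd}_{p(d)}$, the output is also a matching on a subset of half-edges (the half-edges belonging to the kept edges), and in both cases the vertex set is $[n]$. Writing $E_{M}$ for the event that the matching $M$ is contained in the final edge set, the lemma asserts
\begin{equation}
\widetilde{\Pv}(E_{M})=\Pv\bigl(E_{M}\cap\{M\text{ is a regular matching}\}\bigr).
\end{equation}
In $\CMnd_{p(d)}$ an artificial half-edge never appears in the output, so the event $E_{M}$ already forces each half-edge in $M$ to be regular; consequently $\Pv(E_{M})=\Pv(E_{M}\cap\{M\text{ regular}\})=\widetilde{\Pv}(E_{M})$ for every matching $M$.

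Second, I would invoke the standard fact that the collection of cylinders $\{E_{M}\}$ forms a $\pi$-system generating the full (finite) $\sigma$-algebra on the space of (multi)graphs on $[n]$ obtainable from either construction. Equivalently, by inclusion--exclusion, the probability of the exact-matching event $\{E(G)=M^{\ast}\}$ can be written as a signed linear combination of the $\Pv(E_{M})$ over matchings $M\supseteq M^{\ast}$, so equality of all the cylinder probabilities $\Pv(E_{M})$ between the two models forces equality of the full distributions of the edge set. This yields $\CMnd_{p(d)}\overset{d}{=}\widetilde{\CMnd}_{p(d)}$.

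The only delicate step is the initial book-keeping one: one must verify that after the removal of artificial vertices in Definition~\ref{halfper} the surviving graph coincides precisely with the regular-regular sub-matching produced during the pairing, so that $E_{M}$ really does imply the regularity of every half-edge appearing in $M$. This is immediate from the definition but deserves to be mentioned explicitly, since without it the passage from the lemma's equality (with the regularity condition on the right-hand side) to $\Pv(E_{M})=\widetilde{\Pv}(E_{M})$ would not be automatic. Apart from this sample-space identification, the argument is purely formal and requires no further probabilistic input beyond the lemma, so I do not expect any genuine obstacle in writing out the full proof.
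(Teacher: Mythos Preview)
Your proposal is correct and follows the same approach as the paper, which simply states the corollary as ``immediate'' without further argument. Your careful explanation of why $\Pv(E_M)=\Pv(E_M\cap\{M\text{ regular}\})$ in the half-edge model and why the cylinder events determine the law is exactly the content the paper leaves implicit.
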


In the next results, our goal is to determine the new degree distribution of the regular vertices, that is, the empirical degree distribution of $\boldsymbol{d^{rr}}=(d_1^{rr}, d_2^{rr}, \dots, d_n^{rr})$.
Note that we start with a degree sequence $\boldsymbol{d}=(d_1, \dots, d_n)$ with a power-law empirical distribution that satisfies \eqref{slowlypower}. We would like to  maintain a similar power-law condition. This is of course not possible for an arbitrary choice of $p(d)$, thus we need to restrict the edge-retention probabilities to satisfy some degree-dependent bounds. The next proposition is about this:

\begin{proposition}\label{csh}
Let $\CMnd$ be a configuration model with i.i.d.\ degrees following a distribution that satisfies \eqref{slowlypower}. Let us perform degree-dependent percolation on this graph with edge-retention probability $p(d)$ as described in Definition \ref{halfper}. Then, if $p(d)$ satisfies
	\be\label{probcond}
	p(d)>b\exp\left\{-c(\log d)^{\gamma}\right\},
	\ee
with $\gamma<1$, and $b, c>0$, then the empirical degree distribution $F_n^{rr}$ of the degree sequence $\boldsymbol{d^{rr}}$ of the percolated graph also obeys a power law. More precisely, there exists $x_{0}$ such that, for all $x\in[x_{0},n^{\alpha})$, where $\alpha>\frac{1}{2}$, and w.h.p.,
	\be\label{condtight2}
 	1-F_{n}^{rr}(x)\geq \frac{c}{x^{\tau-1-C(\log x)^{\gamma-1}}},
	\ee 
with $\mathcal{L}_{n}=\sum_{i\in[n]}d_{i}\leq n\beta$ for some positive $\beta$ and a given $x_{0}>0$.\end{proposition}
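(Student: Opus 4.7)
The plan is to lower bound $1-F_n^{rr}(x)$ by counting vertices whose \emph{original} degree exceeds a carefully chosen threshold $d^\star(x)$ and whose \emph{percolated} degree remains sizeable. Using the edge-percolation description from Definition \ref{edgeper}, for a vertex $v$ with neighbours $u_1,\dots,u_{d_v}$ in $\CMnd$,
\be
d_v^{rr}=\sum_{i=1}^{d_v}\indicator\{(v,u_i)\text{ is kept}\},
\ee
a sum of independent Bernoulli random variables (conditional on the matching) with success probabilities $p(d_v)p(d_{u_i})$. Taking expectations over the configuration-model matching yields
\be
\E[d_v^{rr}\mid d_v]\;=\;d_v\,p(d_v)\,q,\qquad q:=\E[p(D_U)],
\ee
where $D_U$ is the size-biased degree; the lower bound $p(d)\ge b\mathrm{e}^{-c(\log d)^\gamma}$ together with $\E[D]<\infty$ and $\min_i d_i\ge 2$ shows $q\ge q_0>0$.

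\textbf{Choice of threshold and Chernoff.} For each $x\in[x_0,n^\al)$, let $d^\star=d^\star(x)$ be the smallest integer satisfying $q\,d^\star p(d^\star)\ge 4x$. Inverting $d\cdot b\mathrm{e}^{-c(\log d)^\gamma}\ge 4x/q$ and using $\ga<1$ gives, for $x$ large,
\be
d^\star(x)\;\le\; x\,\exp\bigl\{C'(\log x)^\gamma\bigr\},
\ee
so $\log d^\star=\log x+o(\log x)$. For any vertex $v$ with $d_v\ge d^\star(x)$, standard concentration of $\sum_i p(d_{u_i})$ around $d_v q$ (the neighbours $u_i$ are approximately i.i.d.\ size-biased in $\CMnd$) combined with a Chernoff bound for Bernoulli sums yields $\Pv(d_v^{rr}<x\mid d_v\ge d^\star)\le \mathrm{e}^{-c''x}$, which is negligible once $x\ge x_0$ is sufficiently large.

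\textbf{Propagating the tail.} Let $N_{d^\star}:=|\{v\in[n]\colon d_v\ge d^\star(x)\}|$. Since the $d_i$ are i.i.d., \eqref{slowlypower} together with a binomial concentration bound for $N_{d^\star}$ gives, w.h.p.,
\be
N_{d^\star}\;\ge\; \tfrac12\,n\bigl(1-F(d^\star)\bigr)\;\ge\;\frac{cn}{(d^\star)^{\tau-1+C(\log d^\star)^{\gamma-1}}},
\ee
provided the right-hand side diverges, which is ensured on the entire range $x\in[x_0,n^\al)$ because $\al<1/(\tau-1)$ can be arranged (using $\tau<3$ together with $\al>1/2$). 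Combining with the Chernoff bound above via a second binomial concentration, plus a union bound over a dyadic grid of $x$ that is closed off by the monotonicity of $1-F_n^{rr}$, we get w.h.p.\ simultaneously for all $x$ in the range,
\be
1-F_n^{rr}(x)\;\ge\;\tfrac{1}{4}\cdot\frac{c}{(d^\star(x))^{\tau-1+C(\log d^\star(x))^{\gamma-1}}}.
\ee
Substituting the upper bound on $d^\star(x)$ and using the identity $\exp\{a(\log x)^\gamma\}=x^{a(\log x)^{\gamma-1}}$ absorbs the correction into a slightly enlarged constant in the exponent, yielding the target bound \eqref{condtight2} after renaming constants.

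\textbf{Main obstacle.} The delicate step is verifying that the sub-polynomial correction $\exp\{c(\log d)^\gamma\}$ introduced by $p(d)$ propagates through the inversion $x\mapsto d^\star(x)$ only as a $(\log x)^{\gamma-1}$ correction of the \emph{same form} as in \eqref{slowlypower}, rather than degrading the leading exponent $\tau-1$ itself. The assumption $\gamma<1$ is essential both to keep the inversion under control (so that $\log d^\star\sim\log x$) and to make the deviation probabilities summable over the dyadic grid in the final union bound. The remaining ingredients (edge vs.\ half-edge equivalence established just before this proposition, binomial concentration for $N_{d^\star}$ and for $d_v^{rr}$) are routine and plug in without further difficulty.
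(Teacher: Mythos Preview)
Your strategy is correct and lands on the same inequality the paper proves, but the paper organises the argument through a cleaner two--step decomposition that you may find worth noting. Rather than bounding $d_v^{rr}$ in one shot, the paper first passes to the intermediate sequence $\boldsymbol{d^r}$ obtained by the half-edge thinning alone (Lemma~\ref{samepower}): since the $d_i$ are i.i.d., so are the $d_i^r\sim\BIN(d_i,p(d_i))$, and the power-law tail for $F^r$ follows from exactly your threshold computation $d^\star(x)\le x\exp\{C'(\log x)^\gamma\}$. Only in the second step, from $\boldsymbol{d^r}$ to $\boldsymbol{d^{rr}}$, does the matching enter; here the paper exploits a \emph{uniform} worst-case lower bound on the pairing probability (see \eqref{pk}--\eqref{pk-lower}) to obtain the stochastic domination $d_v^{rr}\ {\buildrel d\over\ge}\ \BIN(d_v^r,q)$ with a deterministic $q>0$, which keeps the $\mathcal X_v$ in \eqref{eq:xiv} independent across vertices. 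Your one-step route instead needs the concentration of $\sum_{i\le d_v} p(d_{u_i})$ around $d_v q$ for \emph{every} high-degree $v$ simultaneously; this is true but not quite ``standard'' when $d_v$ ranges up to $n^\alpha$ and the $u_i$ are sampled without replacement with weak dependencies through the matching, so you would have to spell out that step (e.g.\ via a martingale or exchangeable-pairs argument). The paper's decomposition buys you independence at the cost of one extra lemma; your direct route is shorter but carries a hidden dependency argument. Either way, the crucial observation---that the sub-polynomial factor $\exp\{c(\log d)^\gamma\}$ in $p(d)$ only perturbs the exponent by $O((\log x)^{\gamma-1})$ after inversion because $\gamma<1$---is exactly what drives both proofs.
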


Notice that \eqref{condtight2} is precisely the condition that is necessary to apply Proposition \ref{uppertight}. Thus, it allows us to give a uniform bound on the graph-distance in $\CMnd_{p(d)}$. Also, note that while the degrees $\boldsymbol{d^{r}}$ are still i.i.d.\ when $\boldsymbol{d}$ is, the degrees $\boldsymbol{d^{rr}}$ are {\em not} i.i.d. This explains why we needed to extend the conditions in Proposition \ref{uppertight}.

We prove Proposition \ref{csh} in two steps: First, in Lemma \ref{samepower}, we determine the distribution of the number of regular half-edges, i.e., the distribution of $\boldsymbol{d^{r}}=(d_1^r, \dots, d_n^r)$. Then, we investigate the final degree of regular vertices: we remove those regular half-edges that are paired to artificial vertices, thus determining the regular-to-regular degree sequence $\boldsymbol{d^{rr}}=(d_1^{rr}, \dots, d_n^{rr})$. Therefore, we can also refer to the graph $\CMnd_{p(d)}$ with the notation $\mathrm{CM}_n(\boldsymbol{d^{rr}})$. To prove Proposition \ref{csh}, we first prove the following lemma:

\begin{lemma}[Same power-law exponents after percolation]
\label{samepower}
Consider $\CMnd$ with i.i.d.\ degrees following distribution $F$ satisfying \eqref{slowlypower}. Suppose the conditions of Proposition \ref{csh} apply on $p(d)$, and let $(d_1^{r},\dots, d_n^{r})$ the i.i.d.\ degree sequence obtained by keeping a half-edge connected to a vertex with degree $d$ with probability $p(d)$, as defined in Definition \ref{halfper}.
Let $F^{r}(x)$ be the distribution function of $d_{i}^{r}$ for a single $i\in[n]$. Then, $F^{r}(x)$ still satisfies \eqref{slowlypower}.
Further, w.h.p., the empirical distribution function $F^{r}_n(x)$ satisfies \eqref{condtight}.
\end{lemma}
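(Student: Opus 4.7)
The plan is to view $d_i^r$ as a thinning: conditionally on $d_i$, we have $d_i^r \sim \BIN(d_i, p(d_i))$. The upper bound in \eqref{slowlypower} for $1-F^r$ is immediate from the pointwise inequality $d_i^r \leq d_i$, which yields $1-F^r(x) \leq 1-F(x) \leq x^{-(\tau-1)+C(\log x)^{\gamma-1}}$ by \eqref{slowlypower}. So the real work is the matching lower bound.

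For the lower bound, I would find $y = y(x)$ with $y\,p(y) \geq 2x$ and estimate
\begin{equation*}
1 - F^r(x) \geq \Pv(d_i \geq y)\,\Pv\bigl(\BIN(y,p(y)) \geq x\bigr),
\end{equation*}
obtaining the second factor at least $\tfrac12$ by a Chernoff (or Chebyshev) bound once $x$ is large. Using \eqref{probcond}, $y\,p(y) \geq b\,y\exp\{-c(\log y)^\gamma\}$, so the choice $y = \lceil x\exp\{C''(\log x)^\gamma\}\rceil$ with $C''$ large enough in terms of $b,c$ ensures $y\,p(y) \geq 2x$. The lower tail of $F$ then yields
\begin{equation*}
1-F^r(x) \geq \tfrac12\, y^{-(\tau-1)-C(\log y)^{\gamma-1}}.
\end{equation*}
The technical step is to substitute this $y$ and reabsorb the slowly-varying correction: since $\gamma < 1$, $\log y = \log x + C''(\log x)^\gamma$ is a lower-order perturbation on the $(\log x)^\gamma$ scale, so $(\log y)^\gamma \leq (1+o(1))(\log x)^\gamma$, and the exponent collects to $-(\tau-1)\log x - [(\tau-1)C'' + (1+o(1))C](\log x)^\gamma$. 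Absorbing the prefactor $\tfrac12$ into a slightly enlarged constant in the exponent yields $1-F^r(x) \geq x^{-(\tau-1) - C'(\log x)^{\gamma-1}}$ for some $C' > C$ and all $x \geq x_0$, i.e.\ the distributional version of \eqref{slowlypower}.

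For the empirical version \eqref{condtight}, the key point is that the $d_i^r$ are i.i.d., because the $d_i$ are i.i.d.\ and the thinning uses independent coin-flips. Thus $N_n(x) := \sum_i \indicator\{d_i^r \geq x\}$ is a sum of i.i.d.\ Bernoullis with mean $n p_x \geq n x^{-(\tau-1)-C'(\log x)^{\gamma-1}}$. Choosing $\alpha \in (1/2, 1/(\tau-1))$ (non-empty since $\tau \in (2,3)$), for $x \in [x_0,n^\alpha)$ this mean grows as $n^\beta$ for some $\beta>0$, so a Chernoff bound gives $N_n(x) \geq n p_x/2$ with probability $1 - \exp(-\Omega(n^\beta))$. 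A union bound over the at most $n^\alpha$ integer values of $x$ in the range preserves this uniformly, and dividing by $n$ delivers \eqref{condtight} with room to spare.

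The main obstacle I anticipate is the exponent arithmetic in the lower bound on $1-F^r$: one must balance the cost $\Pv(d_i \geq y)$ of finding a vertex of sufficiently high original degree against the thinning factor $p(y)$. It is precisely the slowly-varying (rather than polynomial) lower bound imposed on $p(d)$ by \eqref{probcond} that keeps $y$ within a slowly-varying factor of $x$, thereby preserving the power-law exponent $\tau-1$ in the tail of $F^r$; were $p(d)$ allowed to decay polynomially, the exponent would shift and the argument would break down.
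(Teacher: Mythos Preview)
Your approach is essentially the same as the paper's: upper bound via $d_i^r\le d_i$, lower bound by conditioning on $\{d_i\ge y\}$ with $y$ chosen so that $y\,p(y)\ge 2x$ and then a Chernoff bound, followed by substituting $y\approx x\exp\{C''(\log x)^\gamma\}$ and reabsorbing the slowly-varying correction; the empirical statement via i.i.d.\ Bernoullis, Chernoff, and a union bound over $x\le n^\alpha$ with $\alpha\in(1/2,1/(\tau-1))$.

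One small gap to fix: the displayed inequality
\[
1-F^r(x)\ \ge\ \Pv(d_i\ge y)\,\Pv\bigl(\BIN(y,p(y))\ge x\bigr)
\]
is not literally correct, because conditioning on $\{d_i=z\}$ with $z\ge y$ gives $d_i^r\sim\BIN(z,p(z))$ and $p(z)\le p(y)$, so $\BIN(z,p(z))$ need not stochastically dominate $\BIN(y,p(y))$. The paper handles this by writing $\Pv(d_i^r>x\mid d_i\ge y)\ge \min_{z\ge y}\Pv(\BIN(z,p(z))>x)$ and then observing that under \eqref{probcond} the map $z\mapsto z\,b\exp\{-c(\log z)^\gamma\}$ is eventually increasing (since $\gamma<1$), so $z\,p(z)\ge 2x$ for all $z\ge y$, and the Chernoff bound applies uniformly in $z$. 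Your ``second factor at least $\tfrac12$'' conclusion is therefore correct, but you should route it through this minimum rather than fixing $z=y$.
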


To prove Lemma \ref{samepower}, we will use the following lemma about concentration of binomial random variables:

\begin{lemma}[Concentration of binomial random variables]
\label{concenthop}
Let $R$ be a binomial random variable.
Then
	\be\Pv\left(R\notin [\mathbb{E}[R]/2, 2\mathbb{E}[R]]\right)\leq2\exp\{-\mathbb{E}[R]/8\}.
	\ee
\end{lemma}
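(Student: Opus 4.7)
The plan is to prove this as a direct consequence of the standard multiplicative Chernoff bound for sums of independent Bernoulli random variables, treating the upper and lower tails separately and then combining them via a union bound. Write $R=\sum_{i=1}^m\xi_i$ with $\xi_i$ independent Bernoulli variables (not necessarily identically distributed, since Lemma \ref{concenthop} just says ``binomial''; but the same bound applies), set $\mu:=\E[R]$, and use the exponential Markov inequality $\Pv(R\ge a)\le e^{-ta}\E[e^{tR}]$ with $\E[e^{tR}]\le\exp\{\mu(e^t-1)\}$.

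For the upper tail, the first step is to apply the standard multiplicative Chernoff bound with deviation parameter $\delta=1$, giving
\be
\Pv(R\ge 2\mu)\le \Big(\frac{e}{4}\Big)^{\mu}\le \exp\{-\mu/3\}\le \exp\{-\mu/8\}.
\ee
For the lower tail, apply the symmetric bound with $\delta=1/2$, which yields
\be
\Pv(R\le \mu/2)\le \exp\{-\mu/8\}
\ee
directly (this is the sharper value $\delta^2\mu/2$ that appears on the lower side). Both inequalities follow by the routine optimization of $t$ in the exponential Markov argument; I would not spell this out in detail since the bounds are entirely classical and can be cited from any standard reference on concentration inequalities.

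The final step is simply the union bound
\be
\Pv\bigl(R\notin[\mu/2,2\mu]\bigr)\le \Pv(R\le \mu/2)+\Pv(R\ge 2\mu)\le 2\exp\{-\mu/8\},
\ee
which yields the stated inequality. There is no real obstacle here: the exponent $1/8$ is dictated by the lower-tail Chernoff bound, and the upper tail is automatically sharper and therefore absorbed into the same constant. Hence the only substantive decision is the choice of the numerical constants $1/2$ and $2$ in the deviation window, which are matched precisely so that the lower-tail exponent becomes $\mu/8$.
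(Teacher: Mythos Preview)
Your proof is correct and is essentially what underlies the paper's own treatment: the paper simply cites a standard reference (\cite[Theorem 2.19]{van2009random}) for this Chernoff-type bound, and your sketch of the multiplicative Chernoff argument with $\delta=1$ on the upper tail and $\delta=1/2$ on the lower tail is precisely the standard derivation of that cited inequality. There is nothing to add.
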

\begin{proof}
See e.g., \cite[Theorem 2.19]{van2009random}.
\end{proof}

\begin{proof}[Proof of Lemma \ref{samepower}]
Let $\boldsymbol{d}$ be the degree sequence  in $\CMnd$, $p:=p(d)$ the edge-retention probability, and $\boldsymbol{d}^{r}, F^r, F_{n}^{r}$ as above. First note that, by construction, when 
$\boldsymbol{d}$ is i.i.d., then also $\boldsymbol{d}^{r}$ is. The upper bound is obvious since
\eqref{slowlypower} implies that
 	\be
 	1-F^r(x)=\Pv(d^r_i>x)\le \Pv(d_i>x) \le x^{-\tau+1+C(\log x)^{\gamma-1}}.
 	\ee
For the lower bound, for some $y=y(x)$ to be chosen later,
	\be\label{eq:lower1}
 	1-F^{r}(x)=  \Pv(\BIN(d_i,p(d_i))>x) \ge \Pv(\BIN(d_i,p(d_i))>x\mid d_i\geq y)\Pv(d_i\geq y).
	\ee
Suppose $y$ is such that $yp(y)\geq 2x$. Then the first factor on the rhs of the previous equation can be bounded as follows:
	\begin{equation*}
	\Pv(\BIN(d_i,p(d_i))>x|d_i\geq y)\geq\min_{z: z\geq y}\Pv(\BIN(z,p(z))>x)).
	\end{equation*}
Then, using the monotonicity of $p(d)$, it holds that  $zp(z)\geq yp(y)\geq 2x$, thus we can apply Lemma \ref{concenthop} to obtain
	\begin{equation*}
	\min_{z\colon z\geq y}\Big(\Pv(\BIN(z,p(z))>x\Big)\\\geq\min_{z\colon z\geq y}\Big(1-\exp\Big\{-\frac{zp(z)}{8}\Big\}\Big).
	\end{equation*}
Again, monotonicity of $p(d)$ implies  $zp(z)\geq yp(y)$, so
	\begin{equation*}
	\min_{z\colon z\geq y}\Big(1-\exp\Big\{-\frac{zp(z)}{8}\Big\}\Big)= 1-\exp\Big\{-\frac{yp(y)}{8}\Big\}.
	\end{equation*}
Combining this with \eqref{eq:lower1}, we obtain the lower bound
	\begin{equation}
	\label{maggiorazione}
	\Pv(\BIN(d_i,p(d_i))>x\mid d_i\geq y)\Pv(d_i\geq y)\geq \Big(1-\exp\Big\{-\frac{yp(y)}{8}\Big\}\Big)\frac{1}{y^{\tau-1+C(\log y)^{\gamma-1}}}.
	\end{equation}
Let us set now $y(x):=2x\exp\{2c(\log \frac{2}{b}x)^{\gamma}\}/b$, for $\gamma, c, b$ as in $\eqref{probcond}$. Since $p(y)$ satisfies the lower bound given in \eqref{probcond},
	\be\ba
     	y(x)p(y(x))&=2x \exp\left\{ 2c (\log \tfrac{2}{b}x)^{\gamma}-c \left( \log \tfrac{2}{b}x + 2c (\log \tfrac{2}{b}x)^{\delta}\right)^\gamma\right\}\\
     	&= 2x \exp\left\{ 2c (\log \tfrac{2}{b}x)^{\gamma} -c (\log \tfrac{2}{b}x)^{\gamma}\left(1+ 2c (\log \tfrac{2}{b}x)^{\gamma-1}\right)^\gamma \right\}.
	\ea
	\ee
Note that since $\gamma<1$, the factor $\left(1+ 2c (\log \tfrac{2}{b}x)^{\gamma-1}\right)^\gamma$ is  less than say $3/2$ (but larger than $1$) if $x$ is large enough, and hence,  for large enough $x$, the rhs is at least
	\[ 
	y(x)p(y(x)) \ge 2x \exp\left\{ \frac12 c (\log \tfrac{2}{b}x)^{\gamma}  \right\} \ge 2x.
	\]
This shows that we can indeed apply Lemma \ref{concenthop} above.

Note that, due to the bound $y p(y)\ge 2x$, the factor $(1- \exp\{ - y p(y)/8\})\ge 1/2$  for large enough $x$.
Using this estimate and again that $y=2 x\exp\{2c(\log \frac{2}{b}x)^{\gamma}\}/b$ we obtain from \eqref{maggiorazione} the lower bound on \eqref{eq:lower1}:
	\be
	1-F^{r}(x) \geq\frac12 \frac{1}{(\frac{2}{b}x)^{\tau-1}} \exp\left\{-(\tau-1)\big(2c(\log \tfrac{2}{b}x)^{\gamma}\big) - C \Big(\log(\tfrac{2}{b}x) + 2c (\log \tfrac{2}{b}x)^{\gamma}   \Big)^{\gamma} 
	\right\}.
	\ee
As before, the second term in the exponent is  $C (\log\tfrac{2}{b}x)^\gamma (1+ 2c (\log \tfrac{2}{b}x)^{\gamma-1})^\gamma$, and, since $\gamma<1$, the latter factor is at most $3/2$ when $x$ is sufficiently large.
Thus
	\be
	1-F^{r}(x) \geq\frac{b^{\tau-1}}{2^\tau} \frac{1}{x^{\tau-1}} \exp\left\{-(\tfrac32 C +(\tau-1))(\log \tfrac{2}{b}x)^{\gamma} \right\}.
	\ee
Finally, again for sufficiently large $x$, $\log \tfrac2b x= \log x (1+ \frac{\log(2/b)}{\log x})\le 2 \log x$, so we arrive at
	\eqn{
	\label{samepower-cdf}
	1-F^{r}(x) \geq\frac{b^{\tau-1}}{2^\tau} \frac{1}{x^{\tau-1}} \exp\left\{-2^\gamma(\tfrac32 C +(\tau-1))(\log x)^{\gamma} \right\}.
	}
As a result, we see that $F^{r}(x)$ satisfies the condition in \eqref{slowlypower} with exponent $\tau$, $\gamma\in (0,1)$, and  $C$ replaced by $2^\gamma(\tfrac32 C +(\tau-1)).$

It is not hard to show that when $d_i^{r}$ is i.i.d.\ with a distribution function $F^{r}(x)$ that satisfies \eqref{slowlypower}, then its empirical distribution function $F_n^{r}$
satisfies  \eqref{condtight2} for some $\alpha>1/2$. Indeed, by \eqref{samepower-cdf}, it follows that $d_{i}^{r}$ are i.i.d.\ random variables with distribution function $F^{r}$ satisfying 
\eqref{slowlypower}. Then $n(1-F^{r}_{n}(x))$ is a binomial random variable with parameters $n$ and $1-F^{r}(x)$, so that, by Lemma \ref{concenthop},
	\be\label{binomiale}
	\Pv\Big(1-F^{r}_{n}(x)\leq \frac{1}{2}\frac{n(1-F^{r}(x))}{n}\Big)<\exp\Big\{-n\Big(\frac{1-F^{r}(x)}{8}\Big)\Big\}.
	\ee
By the monotonicity of $1-F^{r}(x)$,
	\be\label{monbin}
	\exp\Big\{-n\Big(\frac{1-F^{r}(x)}{8}\Big)\Big\}\leq \exp\Big\{-n\Big(\frac{1-F^{r}(n^{\alpha})}{8}\Big)\Big\},
	\ee
for every $x\leq n^{\alpha}$. Then, since  we have just showed in \eqref{samepower-cdf} that $F^r$ satisfies \eqref{slowlypower}, for all $n$ sufficiently large, and $\al>1/2$ (but we are allowed to choose $\al<1/(\tau-1)$), 
	\be\label{lower-222}
	n(1-F^{r}(n^{\alpha}))>\frac{n}{n^{\al(\tau-1+\delta)}}\ge n^{c},
	\ee 
with $\delta$ arbitrarily small and some constant $c>0$.
Then, by a union bound,
	\be\label{eq:union-111}
	\Pv\Big(\exists x\leq n^{\alpha}\colon 1-F^{r}_{n}(x)\leq \frac{1}{2}\frac{n(1-F^{r}(x))}{n}\Big)\leq \sum_{x\leq n^{\alpha}}\Pv\Big(1-F_{n}^{r}(x)<\frac{1}{2}n[1-F^{r}(x)]\Big)
	\ee
and $\alpha$ is defined in \eqref{condtight}, and it is arbitrarily close to $\frac{1}{2}$. By \eqref{binomiale} and \eqref{monbin}, each summand is at most the rhs of \eqref{monbin}. Combining this with \eqref{lower-222} we obtain that each summand on the rhs of \eqref{eq:union-111} is at most $\exp{\{-n^{c}\}}$ for some $c>0$. So, we obtain
	\be 
	\Pv\Big(\exists x\leq n^{\alpha}\colon 1-F^{r}_{n}(x)\leq \frac{1}{2}(1-F^{r}(x))\Big)< n^{\alpha}\exp\{-n^{b}\}=o_n(1),
	\ee
as required. This finishes the proof of Lemma \ref{samepower}.
\end{proof}

We are now ready to complete the proof of Proposition \ref{csh}:
\begin{proof}[Proof of Proposition \ref{csh}]
Let $\boldsymbol{d^{r}}=(d_{1}^{r},\dots ,d_{n}^{r})$ be the degree sequence of the regular vertices of $\CMnd$ after the half-edge degree percolation and before having 
paired the half-edges. Let  $\boldsymbol{d^{rr}}=(d^{rr}_{1},\dots ,d^{rr}_{n})$ be the degree sequence of $\CMnd_{p(d)}$. Notice that (due to the pairing) the degrees 
$(d_{i}^{rr})_{i\in[n]}$ are no longer independent. We have that
	\be
	d^{rr}_{i}=d^{r}_{i}-\{\text{number of regular half-edges of vertex $i$ paired to artificial ones}\}.
	\ee
We define $\mathcal{R}_{n}$ as the total number of regular half-edges in the graph after the degree-dependent half-edge percolation.
We first prove that, for some $q_\ve>0$ and for all $\varepsilon>0$, w.h.p.,
	\be\label{upperbound}
	\frac{\mathcal{R}_{n}}{\mathcal{L}_{n}}\geq q_\varepsilon\ge 2q
	\ee
where $q_{\varepsilon}\rightarrow 3q$ as $\varepsilon\rightarrow 0$, for some $q>0$.
By definition,
	\be
\frac{\mathcal{R}_{n}}{\mathcal{L}_{n}}=\frac{\frac{1}{n}\sum_{i\in[n]}d_{i}^{r}}{\frac{1}{n}\sum_{i\in[n]}d_{i}}.
	\ee
By the Weak Law of Large Numbers, we have that for all $\varepsilon>0$ and w.h.p.
	\be\label{weakone}
		\mathbb{E}[D](1-\varepsilon)\leq \frac{1}{n}\sum_{i\in[n]}d_{i}\leq \mathbb{E}[D](1+\varepsilon),
	 \ee
and
	\be\label{weaktwo}
		\mathbb{E}[D^{r}](1-\varepsilon)\leq \frac{1}{n}\sum_{i\in[n]}d_{i}^{r}\leq \mathbb{E}[D^{r}](1+\varepsilon).
	 \ee
Then,
	\begin{align}\label{324}
	\Pv\Big(\frac{\mathcal{R}_{n}}{\mathcal{L}_{n}}\leq
	\frac{\mathbb{E}[D^{r}](1+\varepsilon)}{\mathbb{E}[D](1-\varepsilon)}\Big)\leq \Pv(\mathcal{R}_{n}\leq\mathbb{E}[D^{r}](1-\varepsilon))+\Pv(\mathcal{L}_{n}\geq \mathbb{E}[D](1+\varepsilon)).
	\end{align}
By \eqref{weakone} and \eqref{weaktwo}, the right hand side in \eqref{324} is $o_{n}(1).$
Then, with $q_{\varepsilon}:=\mathbb{E}[D^{r}](1-\varepsilon)/(\mathbb{E}[D](1+\varepsilon))$ and $3q:=\mathbb{E}[D^{r}]/\Ev[D]$, \eqref{upperbound} is established.
In what follows we prove that w.h.p.\ for all $x\in (x_0, n^{\alpha})$,
	\be\label{thirdstep}
	\Pv(\exists x\leq n^{\alpha}\colon 1-F_{n}^{rr}(x)\leq \frac{1}{2}[1-F_{n}^{r}(b x)])=o_{n}(1).	
	\ee
By Lemma \ref{samepower} we know that $F_n^{r}$ satisfies \eqref{condtight}, thus establishing \eqref{thirdstep} is sufficient  to show that $F_n^{rr}$ also satisfies \eqref{condtight}.

Let $\mathcal{S}^{rr}_{n}(x)$,  $\mathcal{S}^{r}_{n}(x)$  be the sets of vertices with degree higher than $x$ in $\mathrm{CM}_n(\boldsymbol{d^{rr}})$ and $\mathrm{CM}_n(\boldsymbol{d^{r}})$ respectively, and let $\mathcal{E}^{r}_{n}(x)$, $\mathcal{E}^{rr}_{n}(x)$ be the set of half-edges in the set $\mathcal{S}^{r}_{n}(x)$ and $\mathcal{S}^{rr}_{n}(x)$. If $q$ is defined in \eqref{upperbound}, we show that for all $\varepsilon>0$, there exists $x_0$ such that the distribution of the degrees of vertices in $\mathcal{S}_{n}^{rr}(x_0)$ can be bounded from below with an i.i.d.\ sequence of Binomial random variables having distribution $\BIN(d^{r},q)$.

For this, we consider the pairing procedure of the half-edges in $\mathcal{E}^{r}_{n}(x)$ as described in Definition \ref{halfper}, and give a \emph{uniform lower bound} on the probability that a half-edge in $ \mathcal{E}^{r}_{n}(x)$ is paired to a regular half-edge. By the interchangeability of the pairing of the half-edges, we have the following bound:
For any $1\leq k\leq |\mathcal{E}_{n}^{r}(x)|$ we consider an ordered sequence of half-edges $s_{1},\dots ,s_{k}\in\mathcal{E}_{n}^{r}(x)$, 
	\be\label{pk}
	\mathcal{P}_{k}=\Pv(s_k\text{ is paired to a regular half-edge}\mid \mathcal{F}_{k-1})\geq \frac{\mathcal{R}_{n}-2k}{\mathcal{L}_{n}-2k}
	\geq \frac{\mathcal{R}_{n}-2|\mathcal{E}^{r}_{n}(x)|}{\mathcal{L}_{n}},
	\ee
where $\mathcal{F}_{k-1}$ is the $\sigma-$algebra for the first $k-1$ pairings.
By the definition of $\mathcal{E}^{r}_{n}(x)$, for all $\varepsilon>0$, there exists $x_0$ s.t. for all $x\ge x_0$, w.h.p.\ 
 	\be\label{fracEn}
 	\frac{|\mathcal{E}^{r}_{n}(x_0)|}{\mathcal{L}_{n}}\leq\frac{\varepsilon}{2}.
 	\ee
From the proof of \eqref{upperbound}, $\mathcal{R}_{n}/\mathcal{L}_{n}\ge 2q$, implying that w.h.p. 
	\be\label{pk-lower}
	\mathcal{P}_{k}\ge 2q-\varepsilon \ge q,
	\ee
where $3q=\Ev[D^r]/\Ev[D]>0$ as before. This gives a uniform lower bound on the pairing probability independently of the outcome of the previous pairings.
As a consequence, for any $v\in \mathcal{S}_{n}^{r}(x_0),$  the degree $d_{v}^{rr}$ can be coupled to a random variable $\mathcal{X}_{v}$ such that
	\be \label{eq:xiv}
\mathcal{X}_{v}\buildrel{d}\over{\geq}\BIN(d_{v}^{r},q),
	\ee
and for any choice of vertices $w$ and $w'$ in $\mathcal{S}_{n}^{r}(\bar{x})$, $\mathcal{X}_{w}$ and $\mathcal{X}_{w'}$ are independent.
Therefore, w.h.p.,
	\be 
	\Pv(d_{v}^{rr}>m)\geq \Pv\Big(\BIN(d^{r}_{v},q)>m\Big).
	\ee
We now prove \eqref{thirdstep}.
For this, let us consider 
	\be\label{frr}
	1-F_{n}^{rr}(x)\geq \frac{1}{n}\sum_{i\in[n]}\mathbbm{1}_{\{d_{i}^{r}>\frac{10x}{q}\}}\mathbbm{1}_{\{d_{i}^{rr}>x\}}
	\ee
where $x\geq x_0$.
Using \eqref{eq:xiv}, we get 
	\be\label{indices}
	\frac{1}{n}\sum_{i\in[n]}\mathbbm{1}_{\{d_{i}^{r}>\frac{10x}{q}\}}\mathbbm{1}_{\{d_{i}^{rr}>x\}}\buildrel{d}\over{\geq}
	\frac{1}{n}\sum_{i\in[n]}\mathbbm{1}_{\{d_{i}^{r}>\frac{10x}{q}\}}\mathbbm{1}_{\{\BIN_i(\frac{10x}{q},q)>x\}},
	\ee 
	where $\BIN_i(\frac{10x}{q},q)$ is a shorthand notation for independent binomials with the given parameters.
We can rewrite the right hand side of \eqref{indices} as
	\be\label{frri}
	[1-F_{n}^{r}]\left(\frac{10x}{q}\right)-\frac{1}{n}\sum_{i\in[n]}\mathbbm{1}_{\{d_{i}^{r}\geq\frac{10x}{q}\}}
	\mathbbm{1}_{\{\BIN_i(\frac{10x}{q},q)<x\}}.
	\ee
Denoting $n(1-F_{n}^{r}(\frac{10x}{q}))=:m$, and define $X:=\sum_{i=1}^{m}\mathbbm{1}_{\{\BIN_i(\frac{10x}{q},q)<x\}}$. We aim to show that $X<m/2$ holds whp.
By Lemma \ref{concenthop},
	\be 
	\Pv\Big(\BIN\big(\frac{10x}{q},q\big)\leq x\Big)\leq \e^{-10x/8}< \e^{-5x_0/4},
	\ee
so that 
	\be	
	\mathbb{E}[X]=\mathbb{E}\left[\sum_{i=1}^{m}\mathbbm{1}_{\{\BIN_i(\frac{10x}{q},q)<x\}}\right]\leq m\e^{-5x_0/4}.
	\ee
	Let us increase $x_0$ if necessary to ensure that $\e^{-5x_0/4}\le 1/8$.
Note that $X$ is a sum of Bernoulli random variables.  Thus, we can use a Chernoff bound to bound the upper tail of $X$ (see, e.g., \cite[Corollary 3.13]{Sinc11}, with $\beta=\tfrac{1}{2}\exp\{5x_0/4\}-1\ge 3$ and $\mu=m\exp\{-5x_0/4\}$, to obtain the following bound for some $b',C'>0$:
	\be\ba\label{chernoff111}
	\Pv\Big(X>\frac{m}{2}\Big)&<\e^{-\beta^2\mu/(2+\beta)}\leq \exp\left\{-\frac{(\e^{\frac{5x_0}{4}}/2-1)m\e^{-\frac{5x_0}{4}}}{2}\right\}\\
	&<\e^{-m/8}\leq \exp\{-n^{b'}C'\},
	\ea\ee
where first we used that $\beta>2$ thus $\beta^2/(2+\beta)> \beta/2$, and then again, $1/2-\e^{-5x_0/4}\ge 1/4$ to obtain the first formula in the second line. Finally, since $m\ge n^{b'}$ as long as $x\le n^{1/(\tau-1-\delta)}$, we obtain the desired bound above. Then, noting that $m=n [1-F_{n}^{r}](10x/q)$, by \eqref{frr} and \eqref{frri}, \eqref{chernoff111} yields
	\be 
	\Pv\Big(1-F_{n}^{rr}(x)\leq \frac12[1-F_{n}^{r}](10x/q)\Big)\leq \exp{\{-n^{b'}C'\}},
	\ee
so that, by a union bound, 
	\be 
	\Pv\Big(\exists x \in (x_0,n^{\alpha}): 1-F_{n}^{rr}(x)\leq \frac{1}{2}[1-F_{n}^{r}](10x/q)\Big)\leq n^{\alpha}\exp{\{-n^{b'}C'\}}.
	\ee
Then, for $x_0$ large enough, \eqref{thirdstep} follows with $b=10/q$.
This concludes the proof of Proposition \ref{csh}.
\end{proof}

We now use Proposition \ref{csh} to obtain a bound on the graph-distance in the graph after percolation.
Let $\mathcal{D}_{n}^{rr}$ be the graph-distance in $\mathrm{CM}_n(\boldsymbol{d^{rr}})$.
Then, we have the following bound:

\begin{corollary}\label{percolateddistance}
Given $\CMnD$ with i.i.d.\ degrees having distribution satisfying \eqref{slowlypower}, and the corresponding graph $\widetilde{\mathrm{CM}}_n(\boldsymbol{d})_{p(d)}$ that is the result of degree-dependent half-edge percolation with $p(d)$ satisfying \eqref{probcond}. Let $\boldsymbol{d}^{rr}$ denote the corresponding degree sequence for the vertices. Then, for all $\varepsilon_{\ref{percolateddistance}}>0$, there exists $k=k(\varepsilon_{\ref{percolateddistance}})$ s.t., if $u$ and $v$ are two uniformly chosen vertices in $[n]$ with degree at least $k$,
	\be\label{uppertightdistance}
	\sup_{n\geq 1}\Pv\Big(\mathcal{D}_{n}^{rr}(u,v)-\frac{2\log \log n}{|\log (\tau-2)|}\ge1\mid d_{u}\geq k, d_{v}\geq k \Big)< \varepsilon_{\ref{percolateddistance}}.
	\ee
\end{corollary}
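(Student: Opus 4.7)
The plan is to reduce the corollary to Proposition \ref{uppertight} by using Proposition \ref{csh} to verify its hypotheses on the percolated graph. The key preliminary observation is a distributional identity: conditionally on the realised degree sequence $\boldsymbol{d}^{rr}$, the percolated graph $\mathrm{CM}_n(\boldsymbol{d})_{p(d)}$ is distributed as the ordinary configuration model $\mathrm{CM}_n(\boldsymbol{d}^{rr})$. This follows from the half-edge percolation description in Definition \ref{halfper}: conditioning on $\boldsymbol{d}^{rr}$ is equivalent to conditioning on which subset of regular half-edges are paired to other regular half-edges, and the residual matching of those half-edges is uniform by the exchangeability of the configuration-model pairing.

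Next, introduce a high-probability event $\mathcal{G}_n$ on which the hypotheses of Proposition \ref{uppertight} are met for $\mathrm{CM}_n(\boldsymbol{d}^{rr})$. Concretely, let $\mathcal{G}_n$ be the intersection of the event from Proposition \ref{csh} that $F_n^{rr}$ satisfies \eqref{condtight2} with universal constants $c, C, x_0$ and some fixed $\alpha > 1/2$, together with the event $\{\mathcal{L}_n \leq n\beta\}$ for some $\beta > \Ev[D]$. The former holds w.h.p.\ by Proposition \ref{csh}, while the latter follows from the weak law of large numbers applied to the i.i.d.\ sequence $\boldsymbol{d}$; note that $\mathcal{L}_n^{rr} \leq \mathcal{L}_n$ automatically, so the total-degree condition transfers to $\boldsymbol{d}^{rr}$. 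In particular $\Pv(\mathcal{G}_n) \to 1$, and the constants quantifying the lower bound on $1 - F_n^{rr}(x)$ are uniform in $n$, so Proposition \ref{uppertight} can be invoked with a single threshold $k$.

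The final step is to apply Proposition \ref{uppertight} conditionally on $\boldsymbol{d}^{rr}$ on the event $\mathcal{G}_n$. Choosing $\varepsilon_{\ref{uppertight}} = \varepsilon_{\ref{percolateddistance}}/2$ produces a threshold $k = k(\varepsilon_{\ref{percolateddistance}})$ such that, uniformly over $n$ and over realisations of $\boldsymbol{d}^{rr}$ in $\mathcal{G}_n$,
\[
\Pv\Bigl(\mathcal{D}_n^{rr}(u,v) - \tfrac{2\log\log n}{|\log(\tau-2)|} \geq 1 \,\Big|\, d_u^{rr} \geq k,\; d_v^{rr} \geq k,\; \boldsymbol{d}^{rr}\Bigr) < \varepsilon_{\ref{percolateddistance}}/2.
\]
Integrating over $\boldsymbol{d}^{rr}$ and bounding the contribution of $\mathcal{G}_n^c$ trivially by $\Pv(\mathcal{G}_n^c) \leq \varepsilon_{\ref{percolateddistance}}/2$, valid for all $n$ large, yields \eqref{uppertightdistance} for all but finitely many $n$; the remaining values are handled by inflating $k$ further. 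The only genuinely non-mechanical step is the distributional identity in the first paragraph; once that is in hand, the corollary is an almost direct combination of Propositions \ref{csh} and \ref{uppertight}, and I do not anticipate serious obstacles beyond tracking the uniformity of constants.
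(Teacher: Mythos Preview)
Your proposal is correct and follows essentially the same route as the paper: define a high-probability event on which the percolated empirical distribution satisfies the hypotheses of Proposition~\ref{uppertight}, apply that proposition conditionally, and absorb the complementary event. You are in fact more careful than the paper in two places: you make explicit the distributional identity that, conditionally on $\boldsymbol{d}^{rr}$, the percolated graph is a configuration model with that degree sequence (the paper uses this implicitly when it invokes Proposition~\ref{uppertight}), and you include the total-degree bound $\mathcal{L}_n\le n\beta$ in your good event $\mathcal{G}_n$, which Proposition~\ref{uppertight} formally requires.
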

\begin{proof}
Let  $A_{n}$ be the event for $\al>1/2$ as in Proposition \ref{csh}
	\be\label{suffcond}
	A_{n}:=\Big\{1-F^{rr}_{n}(x)\geq \frac{\tilde{C}}{x^{\tau-1+C_{1}(\log x)^{\gamma-1}}},\ \forall x_0\leq x\leq n^{\alpha}\Big\},
	\ee
where $x_0$ is defined in \eqref{fracEn}.
Then by Proposition \ref{csh}, $\Pv(A_{n})=1-o_{n}(1)$. For brevity let us write $\CK_n:=\{\mathcal{D}_{n}^{rr}(u,v)-\frac{2\log \log n}{|\log (\tau-2)|}\ge1\}$. Then,
	\begin{align}
	\sup_{n\geq 1} & \ \Pv\Big(\CK_n\mid d_{u}\geq k, d_{v}\geq k \Big)=\\
	\sup_{n\geq 1}&\left\{\Pv\Big(\CK_n\mid d_{u}\geq k, d_{v}\geq k ,A_{n}\Big)\Pv(A_{n})+
	\Pv\Big(\CK_n\mid d_{u}\geq k, d_{v}\geq k ,A_{n}^{c}\Big)\Pv(A_{n}^{c})\right\}.
	\end{align}
Note that the probability of the first term on the rhs is bounded precisely in Proposition \ref{uppertight}, on $A_n$, while $\Pv(A_n^c)=o_n(1)$ by Proposition \ref{csh}. Thus, there exists $\varepsilon_{\ref{uppertight}}$ s.t.  
	\begin{equation}
	\sup_{n\geq 1} \Pv\Big(\CK_n\mid d_{u}\geq k, d_{v}\geq k \Big)\le \varepsilon_{\ref{uppertight}}(1-o_{n}(1))+o_{n}(1)\le \varepsilon_{\ref{percolateddistance}},
	\end{equation}
	for all $n$ sufficiently large, finishing the proof.
\end{proof}

\section{Tightness of the weight}
\label{sec-tightness}Our goal is to prove Theorem \ref{integralcondition} in this section.
We first prove a uniform bound for the weight of the path that connects the uniformly chosen vertex $u$ to a vertex in the first layer $\Gamma_{y_{0}}$ in $\CMnd_{p(d)}$. For this, we prove that for any choice of $p(d)$ that satisfies condition \eqref{probcond} we can connect a uniform vertex $u$ in $\CMnd$ to a vertex that has sufficiently large degree after the degree-dependent half-edge percolation. We introduce the following notation:
	\be
	\partial \mathcal{B}_{m}(u):=\{w\in \CMnd \text{ s.t. }\mathcal{D}_{n}(u,w)= m\},
	\ee
and
	\begin{align} \label{degs}
	&\deg(V_{m}(u)):=\max_{w\in \partial \mathcal{B}_{m}(u)}d_w,\\
	&\deg(V_{m}^{\sss {p(d)}}(u)):=\max_{w\in \partial \mathcal{B}_{m}(u)}d_w^{rr},
	\end{align}
	where recall that  $d_w^{rr}$ equals the degree of $w_{p(d)}$ in $\CMnd_{p(d)}$.
The following holds:

\begin{lemma}\label{tightshortdis}
For any choice of $p(d)$ that satisfies \eqref{probcond}, for all $\varepsilon_{\ref{tightshortdis}}>0$ and $k>0$, there exists a constant $m:=m(k,\varepsilon_{\ref{tightshortdis}})$ such that w.h.p. 
	\be
	\Pv(\deg(V_{m}^{\sss{p(d)}}(u))< k)<\varepsilon_{\ref{tightshortdis}}.
	\ee	
	\end{lemma}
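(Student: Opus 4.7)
The plan is to find, within a constant number $m$ of exploration steps from $u$ in the unpercolated $\CMnd$, a vertex $w$ whose original degree $d_w$ is so large that, after the degree-dependent half-edge percolation, the vertex still has at least $k$ regular-to-regular half-edges with high probability. By the equivalence of the two percolation procedures (Definitions \ref{halfper} and \ref{edgeper}), we may perform the pairing first and the percolation afterwards, so the exploration of the neighborhood in $\CMnd$ and the percolation on the resulting edges can be treated essentially independently.

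\textbf{Step 1 (reaching a high-degree vertex).} I would first show that for every threshold $K$ and $\eta>0$ there exists $m=m(K,\eta)$ such that
\be
\Pv\bigl(\deg(V_m(u))\geq K\bigr)\geq 1-\eta.
\ee
This is the standard local-neighborhood coupling for the infinite-variance configuration model: a constant-size neighborhood of a uniform vertex $u$ is well approximated by the first $m$ generations of a two-stage branching process in which the root has offspring distribution $D$ and later generations have offspring distribution $B$ from \eqref{forwarddegree}. Because $B$ inherits from \eqref{slowlypower} a power-law tail with index $\tau-1\in(1,2)$, the number of particles at generation $m$ grows doubly exponentially, roughly as $c^{(\tau-2)^{-m}}$, and the maximal degree at generation $m$ grows at least as quickly (using that the maximum of $N$ i.i.d.\ samples from $B$ is of order $N^{1/(\tau-1)}$). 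Hence $\deg(V_m(u))\to\infty$ in probability as $m\to\infty$. This approximation has already been developed in exactly this setting in \cite{2015arXiv150601255B}, so I would simply cite it.

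\textbf{Step 2 (percolated degree of a high-degree vertex).} Conditionally on $d_w\geq K$, I would bound $d_w^{rr}$ from below using the same coupling as in the proof of Proposition \ref{csh}: by \eqref{eq:xiv} combined with the fact that $d_w^r$ is $\BIN(d_w,p(d_w))$, we have
\be
d_w^{rr}\ \text{stochastically dominates}\ \BIN\bigl(d_w,\,q\,p(d_w)\bigr),
\ee
for the constant $q>0$ appearing in \eqref{upperbound}. The hypothesis \eqref{probcond} with $\gamma<1$ gives
\be
d_w\,p(d_w)\geq b\,d_w\exp\{-c(\log d_w)^{\gamma}\}\longrightarrow\infty\qquad(d_w\to\infty),
\ee
so for $K$ chosen large enough that $Kq\,p(K)\geq 4k$, Lemma \ref{concenthop} yields $\Pv(d_w^{rr}<k\mid d_w\geq K)\leq\eta$.

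\textbf{Step 3 (combining).} Given $\varepsilon_{\ref{tightshortdis}}$ and $k$, I would set $\eta=\varepsilon_{\ref{tightshortdis}}/2$, choose $K$ large enough for Step 2, and then choose $m=m(K,\eta)$ from Step 1; a union bound over the two failure events yields the lemma. The only nontrivial ingredient is the branching-process coupling of Step 1; since it has already been established in \cite{2015arXiv150601255B} for exactly the model considered here, the rest of the argument reduces to the elementary binomial concentration in Step 2.
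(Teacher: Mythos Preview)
Your proposal is correct and follows essentially the same approach as the paper's proof: both arguments (i) use the branching-process coupling of the local neighborhood (citing \cite{2015arXiv150601255B}) to show that $\deg(V_m(u))$ exceeds any fixed threshold $K$ for $m$ large enough, (ii) use the stochastic domination $d_w^{rr}\ {\buildrel d \over \geq}\ \BIN(d_w,\,q\,p(d_w))$ together with $d\,p(d)\to\infty$ (from \eqref{probcond}) and a binomial concentration bound to show that a vertex with $d_w\geq K$ retains at least $k$ half-edges after percolation, and (iii) combine via a union bound. The only cosmetic differences are the order of Steps 1 and 2 and that the paper explicitly notes the eventual monotonicity of $d\mapsto d\,p(d)q$ to make the conditional bound in Step 2 uniform over $d_w\geq K$; you implicitly use this when you pass from $d_w p(d_w)\to\infty$ to ``$Kqp(K)\geq 4k$ suffices'', so you may want to state it.
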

\begin{proof}
The proof consists of two steps.
In the first step we show that for a fixed half-edge retention probability $p(d)$, when a vertex has sufficiently high degree before percolation, then after the degree-dependent percolation its degree is at least $K$ with high probability.
In the second step we prove that we can find such a vertex a bounded number of steps away from the uniformly chosen vertex $u$.

 As we have seen in Lemma \ref{samepower} and the proof of Proposition \ref{csh}, the degree $d^{rr}_w$ of a vertex $w$ after percolation is the result of a half-edge percolation as described in Lemma \ref{samepower} and of a second thinning with parameter at least $q$ as described in \eqref{pk} and \eqref{pk-lower}, whenever $d_w\ge x_0$. This results in a two-step thinning of $d_w$, where $d_w^r\ {\buildrel d \over =}\  \BIN(d_w, p(d_w))$ and $d_w^{rr}\ {\buildrel d \over \ge }\ \BIN(d_w^r, q)$. Thus we obtain the stochastic domination $d_w^{rr}\ {\buildrel d \over \ge }\  \BIN(d_w, p(d_w)q)$, for all $w$ with $d_w\ge x_0$.

Note that since $p(d)$ satisfies \eqref{probcond}, and $q$ is fixed, the mean $dp(d)q$ is monotone increasing in $d$ for all $d$ sufficiently large.  
Indeed, 
\be \mathbb{E}[\BIN(d,\e^{-c(\log d)^{\gamma}})q]\ge db\e^{-c(\log d)^{\gamma}}q = \e^{\log d(1-c(\log d)^{\gamma-1})}q.\ee
Thus, let us define for any $z>2$, $\widetilde K_z:=\inf\{d: db\e^{-c(\log d)^{\gamma}}q \ge zk\}$.
Then,  a Chernoff bound as in \cite[Corollary 13.3]{Sinc11}, with $\beta=1-1/z\ge 1/2$ and $\mu=zk$ yields that
\[ \Pv( \BIN(\widetilde K_z, p(\widetilde K_z) q) < k ) \le \e^{-\beta^2 \mu/2} \le \exp\{ -(1-1/z)^2 zk/2\} \le \exp\{-zk/8\}.  \]
Thus, for any $\varepsilon_{0}>0$ and any $k>0$, we can choose $z:=z(\ve_0)$ large enough and then $\widetilde K_{z(\ve_0)}$ accordingly, such that the rhs is at most $\ve_0$. Combining this with the stochastic domination and the monotonicity argument above, we obtain that for any vertex $w$
	\be\label{bound} 
	\Pv( d_w^{rr} < k | d_w\ge  \widetilde K_{z(\ve_0)} )<\varepsilon_{0}.
	\ee 
We now show that we can connect a uniformly chosen vertex $u$ with a vertex with degree at least $\widetilde{K}_{z(\ve_0)}$ in a bounded number of steps and probability tending to $1$.
By \cite[Proposition 2]{2015arXiv150601255B},
	\be\label{ak}
	|\partial \mathcal{B}_{m}|\convp\infty,
	\ee
as $m\rightarrow\infty$. Further, the sequence $\{(d_{v_i})_{v_{i}\in\partial \mathcal{B}_{m}}\}$ can be coupled to $|\partial \mathcal{B}_{m}|$ many i.i.d.\ random variables with a power-law distribution described in \eqref{forwarddegree} (see, e.g., \cite[Proposition 4.7]{BHH10}).
Then 
	\be 
 	\Pv(\deg(V_{m}(u))\leq \widetilde{K}_{z(\ve_0)})\leq \Big(\frac{\alpha_{1}}{\widetilde{K}_{z(\ve_0)}^{\tau-2+\alpha_{2}}} \Big)^{m},
 	\ee 
 with $\deg(V_{m}(u))$ defined in \eqref{degs} and for some positive constants $\alpha_{1}$ and $\alpha_{2}$.
 Then, for all $\varepsilon_{1}>0$, there exists $m_{\varepsilon_{1}}$ such that $\Pv(\deg V_{m_{\varepsilon_{1}}}\leq \widetilde{K}_{z(\ve_0)})<\varepsilon_{1}$.
Finally, let 
	\begin{align*} 
	&E_{1}:=\{d_w^{rr} < k | d_w\ge  \widetilde K_{z(\ve_0)}\}\\
	&E_{2}:=\{\deg(V_{m_{\varepsilon_1}}(u))\leq\widetilde{K}_{z(\ve_0)}\},
	\end{align*}
Then, by a union bound,
	\be 
	\Pv(\deg(V_{m}^{p(d)})< k)\leq \Pv(E_{1}^{c})+\Pv(E_{2}^{c})\leq \varepsilon_{0}+\varepsilon_{1}.
	\ee
with $\varepsilon_{0}$ and $\varepsilon_{1}$ arbitrarily small.
This completes the proof of the lemma, with distance $m=m_{\ve_1}$.
\end{proof}

We are ready to prove Theorem \ref{integralcondition} by constructing a path with tight excess weight.

\begin{proof}[Proof of Theorem \ref{integralcondition}]
Let us set $k>x_{0}$, where $x_{0}$ is defined in Proposition \ref{uppertight}.
As a consequence of Lemma \ref{tightshortdis}, for any fixed $\varepsilon$, with probability at least $1-\varepsilon$, there exists a path in $\CMnd$ that connects $u$ with a vertex $u_{k}$ with  $d_u^{rr}\ge k$  in at most $m:=m(\varepsilon_{\ref{tightshortdis}}, k)$ steps. We call this path $\pi_{k}$.
The weight over the path $\pi_{k}$ is given by the sum of at most $m$ i.i.d.\ random variables with distribution $F_{Y}$. Therefore for all $\varepsilon_{\eqref{pesok}},$ there exists $r'=r'(\varepsilon_{\eqref{pesok}})$ such that
	\be\label{pesok}
	\Pv(W_{n}(u,u_{k})>r')<\varepsilon_{\eqref{pesok}}
	\ee
We recursively describe a path $\pi_{u}(i)$ in $\CMnd_{p(d)}$ as follows: $\pi_{u}(0)=u_{k}$, $\pi_{u}(i+1)$ is the vertex having the maximum degree between all the neighbours of $\pi_{u}(i)$. If there are more vertices with the same degree we choose the one that is connected to $\pi_{u}(i)$ with the least edge-weight.
Then, if $y_{i}$ with $y_{0}=k$, is a sequence as in \eqref{recursive}, the proof of Proposition \ref{uppertight} guarantees that $\pi_{u}(i)$ has degree at least $y_{i}$ for all $i$, with probability $1-o_k(1)$, see \eqref{varepsiloni}. We define $\pi_v(i)$ analogously.
We aim to prove an upper bound on the weight of the path that connects $u_k$ and $v_k$   in $\CMnd_{p(d)}$. For this recall that we have assigned a weight from distribution $1+X$ to \emph{each half-edge} and that the half-edge percolation described in Definition \ref{halfper} can be later interpreted as thinning the half-edges with too high excess weight. Nevertheless,
	\be
	W_{n}(u_{k},v_{k})\leq \mathcal{D}^{rr}_{n}(u_{k},v_{k})+\sum_{i=0}^{b(n)}\Big(w_{\pi_{u}(i)}^{1}+w_{\pi_{u}(i)}^{2}+ w_{\pi_{v}(i)}^{1}+w_{\pi_{v}(i)}^{2}\Big),
	\ee
where for $z=u,v$, $w_{\pi_{z}(i)}^{1}, w_{\pi_{z}(i)}^{2}$ are the excess edge-weight of the two half-edges attached to $\pi_z(i)$ that the path $\pi_z$ uses.
We now fix $\varepsilon>0$, and we choose $k$ so large that $\sum_{i=1}^{b(n)}\varepsilon_{i}<\varepsilon$. 
Considering two identical constructions for both vertices $u_{k}$ and $v_{k}$, by Proposition \ref{uppertight}, with probability at least $1-\varepsilon_{\ref{uppertight}}$, $\mathcal{D}^{rr}_{n}(u_{k},v_{k}) \le 2\frac{\log\log n}{|\log(\tau-2)|}$.

Further, again by the \emph{proof of} Proposition \ref{uppertight}, that $\pi_{z}(i)$ is a vertex with degree at least $y_i$ \emph{in the percolated graph}.
Recall that we can write $p(d)=\Pv(X\le \xi_d)$ for some deterministic $(\xi_d)_{d\ge 1}$ and the half-edge percolation can be interpreted as thinning the half-edges with too high excess weight. Thus, the additional edge weights on the half-edges attached to $\pi_{z}(i)$ survived the degree-dependent percolation and as a result are at most $\xi_{y_{i}}$.
This implies that $w_{\pi_{z}(i)}^{j}\leq \xi_{y_{i}}$ for $j=1,2$ and $z=u,v$.
Therefore,
	\be\label{peso}
W_{n}(u_{k},v_{k})\leq 2\frac{\log\log n}{|\log(\tau-2)|}+4\sum_{i=0}^{b(n)}\xi_{y_{i}}.
	\ee
Finally, by \eqref{pesok}, the probability that the sum of the weights in the paths connecting $u$ with $u_{k}$ and $v$ with $v_{k}$ does not exceed $2r'$ is at least $1-2\varepsilon_{\eqref{pesok}}$.
Then,
	\be\label{weight-upper-final} 
	W_{n}(u,v)\leq 2r'+2\frac{\log\log n}{|\log(\tau-2)|}+2\sum_{i=0}^{b(n)}\xi_{y_{i}},
	\ee 
with probability at least $1-2\varepsilon_{\eqref{pesok}}-\varepsilon_{\ref{uppertight}}$.

It remains to show that there exists a choice of $\xi_d$ such that $\sum_{i=0}^{b(n)}\xi_{y_{i}}$ is bounded.

First we start rewriting the integrability criterion in \eqref{inverse-crit}. By a change of variables $u:=1/y$, we obtain that \eqref{inverse-crit} is equivalent to the convergence of
\be\label{inverse-changed}\int_0^c F^{(-1)}_{\sss X}\left( \exp\{ -C/y \}\right) \frac1y \mathrm dy \ee
for some $c,C>0$. Note first that, due to a simple change of variables, the constant $C$ in the exponent can be chosen arbitrarily. We shall set its proper value later on.
For now, let us fix an \emph{arbitrary} $\alpha \in (\tau-2, 1)$ and cut the integral at the powers of $\alpha$. Then, the convergence of \eqref{inverse-changed} implies the convergence of the sum
	\be
	\label{eq::bound11} 
	\sum_{n=K}^\infty \int_{\alpha^{n+1}}^{\alpha^{n}} F^{(-1)}_{\sss X}\left( \exp\{ -C/y \}\right) \frac1y \mathrm dy,
	\ee
where $L$ can be chosen as $K:=\min \{n\colon \al^n \le c\}$.
By the monotonicity of the inverse function $F_{\sss X}^{(-1)}(\cdot)$,
	\be
	\label{eq::bound12}  
	F^{(-1)}_{\sss X}\left( \frac{1}{\e^{C/\alpha^{n+1}}}\right) (1-\alpha) \le \int_{\alpha^{n+1}}^{\alpha^{n}} F^{(-1)}_{\sss X}\left( \frac{1}{\e^{C/y}}\right) \frac1y \mathrm dy,
	\ee
hence the convergence of the integral in \eqref{inverse-crit} implies that
	\be
	\label{eq::sum-3} 
	\sum_{n=K}^{\infty} F^{(-1)}_{\sss X}\left( \frac{1}{\e^{C/\alpha^n}}\right) < \infty.
	\ee
Now we turn to the choice of $\xi_d$ and thus $p(d)$.
The intuitive idea is the following: recall that in the path constructed in the proof of Proposition \ref{uppertight},  by Lemmas \ref{yibounds} and \ref{stim}, for the $i$th vertex of the constructed path, the degrees are $y_i\in ((k^{1-\delta})^{1/(\tau-2)^i}, k^{1/(\tau-2)^i})$, for some constants $k$ and  $\delta\in(0,1)$. Thus, we would like to set $p(d)$ so that the equation
	\be
	\label{eq::initial} 
	p\left(y_i\right)= \e^{-C/\alpha^i}
	\ee
holds.
For this, $d=(k^{1-\delta})^{1/(\tau-2)^i}$ implies that $i=\log \left( \log (d)/\log (k^{1-\delta})\right)/{|\log(\tau-2)|}$, which, when used on the right hand side of \eqref{eq::initial}, results in the definition
	\[ 
	p(d):=\exp \left\{ -C\left( \frac{\log d}{\log (k^{1-\delta})}\right)^{ |\log \alpha|/|\log (\tau-2)|} \right\} =  \exp \left\{ -C'\left( \log d\right)^{ |\log \alpha|/|\log (\tau-2)|} \right\},
	\]
with $C':=C \log (k^{1-\delta})^{-|\log \alpha|/|\log (\tau-2)|}$. Due to the fact that we have chosen $\alpha\in(\tau-2,1)$, we have  $|\log \alpha|/|\log (\tau-2)|:=\gamma<1$.
Thus, the conditions of Proposition \ref{csh} are satisfied with this choice of $p(d)$.
Further, note that, since $p(d)=\Pv(X\le \xi_d)$, we have $\xi_d=F_{\sss X}^{(-1)}(p(d))$.
Since $p(\cdot)$ is monotone decreasing in $d$, $y_i \ge (k^{1-\delta})^{1/(\tau-2)^i}$ implies
\[ \xi_{y_i}=F_{\sss X}^{-1}(p(y_i)) \le F_{\sss X}^{(-1)}\left(p((k^{1-\delta})^{1/(\tau-2)^i})\right)= F_{\sss X}^{(-1)}\left(\e^{-C/\alpha^i}\right). \]
Note that the right hand side is summable in $i$ by \eqref{eq::sum-3}, thus the excess edge-weight on the edges in the path through the layers $\Gamma_i$ is bounded, i.e.
\be
\sum_{i=0}^{\infty} \xi_{y_i}< \infty.
\ee
This, combined with \eqref{weight-upper-final} finishes the proof of the upper bound. 
The proof of the lower bound follows from the fact that 
\[ W_n(u,v) \ge \CD_n(u,v)\]
and the latter is tight around $2\log\log n/|\log (\tau-2)|$  by \cite{MR2318408}.
 \end{proof}

\paragraph{Acknowledgements}
This work is supported by the Netherlands
Organisation for Scientific Research (NWO) through VICI grant 639.033.806 (EB and RvdH), VENI grant 639.031.447 (JK), the Gravitation {\sc Networks} grant 024.002.003 (RvdH) and the STAR Cluster (JK).

\bibliographystyle{abbrv}
\bibliography{bibliografia}

\end{document}